\numberwithin{equation}{section}
  \newtheorem{theorem}{Theorem}[section]
  \newtheorem{proposition}[theorem]{Proposition}
  \newtheorem{lemma}[theorem]{Lemma}
  \newtheorem{corollary}[theorem]{Corollary}
  \newtheorem{remark}[theorem]{Remark}
  \newtheorem{definition}[theorem]{Definition}
\title[Non-existence of lightlike hypersurfaces]{Non-existence of certain lightlike hypersurfaces of an indefinite Sasakian manifold}
\author[Samuel Ssekajja]{Samuel Ssekajja}
\newcommand{\acr}{\newline\indent}
\address{ School of Mathematics\acr
 University of the Witwatersrand\acr
 Private Bag 3, Wits 2050\acr
South Africa}
\email{samuel.ssekajja@wits.ac.za} 
\thanks{}
\subjclass[2010]{Primary 53C25; Secondary 53C40, 53C50}
\keywords{Lightlike hypersurfaces, Parallel second fundamental forms, Recurrent tensors}
\begin{document}

\begin{abstract}
Here, we consider a lightlike hypersurface, tangent to the structure vector field, of an indefinite Sasakian manifold. We prove that no such a hypersurface can either have parallel or recurrent second fundamental forms. In addition to the above, we also prove that no such a hypersurface may have parallel or recurrent induced structural tensors.
\end{abstract}
\maketitle
%%%%%%%%%%%%%%%%%%%%%%%%
\section{Introduction} 
%%%%%%%%%%%%%%%%%%%%%%%%

Lightlike submanifolds differs significantly from their non-degenerate counterparts. Such differences results from the fact that the  tangent bundle and the normal bundle of a lightlike submanifold have a non-trivial intersection. This makes the study of lightlike geometry extremely difficult compared to the non-degenerate case. In the books \cite{Duggal5, Duggal6} and \cite{Kupeli}, the authors introduce the geometry of lightlike submanifolds in semi-Riemannian manifolds, with relatively different approaches. Following the fundamental tools developed in the above books, many scholars have investigated the geometry of lightlike submanifolds. For instance, see, among others, the following articles; \cite{Aksu, Calin1, Duggal3, Duggal4, Bilal, Jin1, Jin2, Kang, Massamba1, Massamba2, ssekajja1, Sus}.

The geometry of lightlike hypersurfaces of indefinite Sasakian manifolds have been extensively investigated. The most investigated, among all, are the ones which are tangent to the structure vector field of the indefinite Sasakian manifold. For example, \cite{Jin2, ssekajja1} have shown that such hypersurfaces are never totally umbilical, totally $\eta$-umbilical, totally screen umbilical or screen conformal. Moreover, their screen distributions are never parallel, and their induced connections are never metric connections. Although the above hypersurfaces have also been studied under the condition that their second and local second fundamental forms are parallel, we have shown, in this paper, that such hypersurfaces can not be studied under such conditions. In fact, these hypersurfaces can not have parallel second fundamental forms nor parallel induced structure tensors (see Theorems \ref{theorem1}, \ref{theorem2}, \ref{theorem3}, \ref{theorem5}, \ref{theorem6} and \ref{theorem7}). The rest of the paper is arranged as follows; In Section \ref{pre}, we give some basic preliminaries on lightlike hypersurfaces required in the rest of the paper. In Section \ref{lightlike hypersurfaces}, we give some constructions on lightlike hypersurfaces of an indefinite Sasakian manifold. In Sections \ref{parasff} and \ref{recusff}, we discuss parallelism and recurrence of second fundamental forms. Finally, in Section \ref{parast}, we discuss the parallelism and recurrence of the induced structural tensors.

%%%%%%%%%%%%%%%%%%%%%%%%%%%%%%%%%%%%%%%%%%%
\section{Preliminaries} \label{pre}
%%%%%%%%%%%%%%%%%%%%%%%%%%%%%%%%%%%%%%%%%%%
Let $(\bar{M},\bar{g})$ be a $(2n+1)$-dimensional semi-Riemannian manifold with index $q$, where  $0< q < 2n+1$, and let $(M,g)$ be a hypersurface of $\bar{M}$. Let $g$ be the induced tensor field by $\bar{g}$ on $M$. Then, $M$ is called a {\it lightlike hypersurface} of $\bar{M}$ if $g$ is of constant rank $2n-1$ and the normal bundle $TM^{\perp}$ is a distribution of rank 1 on $M$ \cite{Duggal5, Duggal6}.  Here,  the fibres of the  vector bundle $TM^{\perp}$ are defined as $T_{x}M^{\perp}=\{Y_{x}\in T_{x}\bar{M}:\bar{g}_{x}(X_{x},Y_{x})=0\}$, for any $X_{x}$ tangent to $M$ and $x \in M$.  Let $M$  be a lightlike hypersurface of $(\bar{M},\bar{g})$. Consider the complementary distribution $S(TM)$ to $TM^{\perp}$ in $TM$, which is called a {\it screen distribution}. It is well-known that $S(TM)$ is non-degenerate (see \cite{Duggal5, Duggal6}). Thus,  we have  
\begin{align}\label{n97}
TM=S(TM)\perp TM^{\perp}.
\end{align}
As $S(TM)$ is non-degenerate with respect to $\bar{g}$, we have $T\bar{M}=S(TM)\perp S(TM)^{\perp}$, where $S(TM)^{\perp}$ is the complementary vector bundle to $S(TM)$ in $T\bar{M}|_{M}$. 

\begin{theorem}[Duggal-Bejancu \cite{Duggal5}]
Let $(M,g)$ be a lightlike hypersurface of $(\bar{M},\bar{g})$. Then, there exists a unique vector bundle $tr(TM)$, called the  lightlike transversal bundle of $M$ with respect to $S(TM)$,  of rank 1 over $M$ such that for any non-zero section $\xi$ of $TM^{\perp}$ on a coordinate neighbourhood $\mathcal{U}\subset M$, there exists a unique section $N$ of $tr(TM)$ on $\mathcal{U}$ satisfying 
\begin{align}\label{p5}
	\bar{g}(\xi,N)=1,\quad \bar{g}(N,N)=\bar{g}(N,Z)=0,
\end{align}
for any $Z$ tangent to $S(TM)$. 
\end{theorem}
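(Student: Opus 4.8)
The plan is to localize the problem to the rank-two, non-degenerate vector bundle $S(TM)^{\perp}$ and reduce the construction of $N$ to a normalization of a null vector. First I would note that since $\bar{g}$ is non-degenerate on $S(TM)$, its orthogonal complement $S(TM)^{\perp}$ in $T\bar{M}|_{M}$ is itself non-degenerate, of rank $(2n+1)-(2n-1)=2$. Moreover, any $\xi\in TM^{\perp}$ is orthogonal to $TM$, hence to $S(TM)$, so $TM^{\perp}$ is precisely the null line sitting inside $S(TM)^{\perp}$. The key observation is that the requirement $\bar{g}(N,Z)=0$ for all $Z$ tangent to $S(TM)$ is exactly the requirement that $N$ be a section of $S(TM)^{\perp}$; thus the entire construction takes place inside this rank-two bundle.

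Next I would establish existence by an explicit formula. Fixing a non-zero section $\xi$ of $TM^{\perp}$ over $\mathcal{U}$, the non-degeneracy of $\bar{g}$ on $S(TM)^{\perp}$ together with $\bar{g}(\xi,\xi)=0$ prevents $\xi$ from lying in the radical, so there is a smooth section $V$ of $S(TM)^{\perp}$ on $\mathcal{U}$ with $\bar{g}(\xi,V)\neq 0$. I would then set
\[
N=\frac{1}{\bar{g}(\xi,V)}\Bigl(V-\frac{\bar{g}(V,V)}{2\,\bar{g}(\xi,V)}\,\xi\Bigr),
\]
which is a smooth section of $S(TM)^{\perp}$. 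A short computation, using only $\bar{g}(\xi,\xi)=0$, gives $\bar{g}(\xi,N)=1$ and $\bar{g}(N,N)=0$, while $N\in S(TM)^{\perp}$ automatically yields $\bar{g}(N,Z)=0$ for every $Z$ tangent to $S(TM)$. For uniqueness I would observe that $\bar{g}(\xi,N)=1\neq 0=\bar{g}(\xi,\xi)$ forces $\xi$ and $N$ to be linearly independent, so $\{\xi,N\}$ is a local frame for $S(TM)^{\perp}$. Any other admissible $N'$ lies in $S(TM)^{\perp}$ and hence may be written $N'=aN+b\xi$; the condition $\bar{g}(\xi,N')=1$ forces $a=1$, and then $\bar{g}(N',N')=0$ reduces to $2ab=0$, forcing $b=0$, so $N'=N$.

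Finally I would define $tr(TM)$ to be the line bundle locally spanned by $N$ and verify it is intrinsically attached to $S(TM)$ rather than to the auxiliary choices: replacing $\xi$ by $\alpha\xi$ (the only freedom, since $TM^{\perp}$ has rank one) replaces the normalized $N$ by $\alpha^{-1}N$, leaving the span unchanged, and a different choice of $V$ produces the same $N$ by the uniqueness just proved. The step I expect to be the main obstacle is precisely this globalization, namely checking that the locally defined lines patch into a smooth rank-one vector bundle over all of $M$; this is where the independence from $\xi$ and $V$ must be used to guarantee that the transition data on overlaps are consistent, so that $tr(TM)$ is genuinely a well-defined bundle and not merely a family of local sections.
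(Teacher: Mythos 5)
Your proof is correct, and it is essentially the classical argument from the cited source (Duggal--Bejancu), which the paper itself quotes without proof: reduce to the rank-two non-degenerate bundle $S(TM)^{\perp}$, construct $N$ by the explicit null normalization $N=\frac{1}{\bar{g}(\xi,V)}\bigl(V-\frac{\bar{g}(V,V)}{2\bar{g}(\xi,V)}\,\xi\bigr)$, and get uniqueness from the frame $\{\xi,N\}$ of $S(TM)^{\perp}$. Your globalization step, with the rescaling $\xi\mapsto\alpha\xi$, $N\mapsto\alpha^{-1}N$ showing the local spans agree on overlaps, is exactly how the transversal bundle $tr(TM)$ is assembled in the standard treatment, so there is no gap.
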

Consequently, we have the following decomposition of $T\bar{M}$.  
\begin{align}
	T\bar{M}|_{M}&=S(TM)\perp \{TM^{\perp}\oplus tr(TM)\}=TM\oplus tr(TM).\label{n100}
\end{align}

Let $\nabla$, $\nabla^{t}$, $\nabla^{*}$ and  $\nabla^{*t}$ denote the induced connections on $M$,  $tr(TM)$, $S(TM)$ and $TM^{\perp}$, respectively, and $P$ be the projection of $TM$ onto $S(TM)$. Then the local Gauss-Weingarten equations of $M$ and $S(TM)$ are the following \cite{Duggal5, Duggal6}.
\begin{align}
 \bar{\nabla}_{X}Y&=\nabla_{X}Y+h(X,Y)=\nabla_{X}Y+B(X,Y)N,\label{int1}\\
 \bar{\nabla}_{X}N&=-A_{N}X+\nabla_{X}^{t}N=-A_{N}X+\tau(X)N,\label{flow6}\\
  \nabla_{X}PY&=\nabla^{*}_{X}PY+h^{*}(X,PY)= \nabla^{*}_{X}PY + C(X,PY)\xi,\label{int3}\\
  \nabla_{X}\xi &=-A^{*}_{\xi}X+\nabla_{X}^{*t}\xi=-A^{*}_{\xi}X -\tau(X) \xi,\label{flow7}
 \end{align}
for all $X$ and $Y$ tangent to $M$, $\xi$ tangent to $TM^{\perp}$ and $N$ tangent to $tr(T M)$, where $\bar{\nabla}$ is the Levi-Civita connection on $\bar{M}$. In the above setting, $h$ is called the second fundamental form and  $B$ the local second fundamental form of $M$. Furthermore, $h^{*}$ the second fundamental form and $C$ the local second fundamental form on $S(TM)$.  $A_{N}$ and $A^{*}_{\xi}$ are the shape operators of $TM$ and $S(TM)$ respectively, while $\tau$ is a 1-form on $TM$. 
 
 The above shape operators are related to their local fundamental forms by 
 \begin{align}
 	g(A^{*}_{\xi}X,Y) &=B(X,Y),\label{p9}\\
 	 g(A_{N}X,PY) &= C(X,PY), \nonumber
 \end{align}
 for any $X$ and $Y$ tangent to $M$. It follows from  (\ref{p9}) that 
 \begin{align}\label{p10}
 	B(X,\xi)=0,
 \end{align}
 for any $X$ tangent to $M$. Moreover, we have
 \begin{align}\label{p11}
 	\bar{g}(A^{*}_{\xi}X,N)=0\quad \mbox{and}\quad \bar{g}(A_{N} X,N) = 0, 
 \end{align}
 for all $X$ tangent to $M$. From  relations (\ref{p11}), we notice that $A_{\xi}^{*}$ and $A_{N}$ are both screen-valued operators. Since $S(TM)$ is non-degenerate (\ref{p10}) implies that 
 \begin{align}\label{n95}
 A_{\xi}^{*}\xi=0.
 \end{align}
 Let $\theta$ be a 1-form, on $M$, defined on $M$ by 
 \begin{align}\label{n96}
 \theta(X)=\bar{g}(X, N),
 \end{align}
 for all $X$ tangent to $M$. Then, in view of decomposition (\ref{n97}), any $X$ tangent to $M$ can be written as 
 \begin{align}\label{n98}
 X=PX+\theta(X)\xi.
 \end{align}
 
It is easy to show that 
\begin{align}\label{p40}
	(\nabla_{X}g)(Y,Z)=B(X,Y)\theta(Z)+B(X,Z)\theta(Y), 
\end{align}
for all $X$, $Y$ and $Z$ tangent to $M$. Consequently,  $\nabla$ is generally not a metric connection with respect to $g$. However, it is known that the induced connection $\nabla^{*}$ on $S(TM)$ is a metric connection.

On every lightlike hypersurface $(M,g)$ of a semi-Riemannian manifold $(\bar{M}, \bar{g})$, the covariant derivatives of the second fundamental form $h=B\otimes N$ and the local second fundamental form $B$ are defined as 
\begin{align}
(\nabla_{X}h)(Y,Z)&=\nabla^{t}_{X}h(Y,Z)-h(\nabla_{X}Y,Z)-h(Y, \nabla_{X}Z),\label{n102}\\
(\nabla_{X}B)(Y,Z)&=X\cdot B(Y,Z)-B(\nabla_{X}Y,Z)-B(Y, \nabla_{X}Z),\label{n103}
\end{align}
for any $X$, $Y$ and $Z$ tangent to $M$. From the fact  $\nabla^{t}_{X}N=\tau(X)N$, we have 
\begin{align}\label{x10}
\nabla^{t}_{X}h(Y,Z)&=X\cdot B(Y,Z)N+B(Y,Z)\nabla^{t}_{X}N\nonumber\\
&=\{X\cdot B(Y,Z)+B(Y,Z)\tau(X)\}N,
\end{align}
for any $X$, $Y$ and $Z$ tangent to $M$. In view of (\ref{n102}), (\ref{n103}) and (\ref{x10}), $h$ and $B$ satisfy the relation 
\begin{align}\label{x211}
(\nabla_{X}h)(Y,Z)=\{(\nabla_{X}B)(Y,Z)+\tau(X)B(Y,Z)\}N,
\end{align}
for any $X$, $Y$ and $Z$ tangent to $M$.

\section{Lightlike hypersurfaces of indefinite Sasakian manifolds}\label{lightlike hypersurfaces}
An odd-dimensional semi-Riemannian manifold $(\bar{M},\bar{g})$ is called contact metric manifold \cite{Duggal1} if there are a $(1, 1)$ tensor field $\bar{\phi}$, a vector field $\zeta$ , called structure vector field, and a 1-form $\eta$ such that
\begin{align}
	\bar{g}(\bar{\phi}X,\bar{\phi}Y)&=\bar{g}(X,Y)-\eta(X)\eta(Y),\quad \bar{g}(\zeta,\zeta)=1,\label{p2}\\
	\bar{\phi}^{2}X&=-X+\eta(X) \zeta, \quad \bar{g} (X,\zeta)=\eta(X),\label{p1}\\
	&d\eta(X,Y)=\bar{g}(X,\bar{\phi}Y),\nonumber
\end{align}
for any $X$ and $Y$ tangent to $\bar{M}$. It follows that $\bar{\phi}\zeta=0$, $\eta\circ \bar{\phi}=0$ and  $\eta(\zeta)=1$. Then $(\bar{\phi},\zeta,\eta,\bar{g})$ is called contact metric structure of $\bar{M}$. Furthermore, $\bar{M}$ has a normal contact structure if $N_{\bar{\phi}}+d\eta \otimes \zeta=0$, where $N_{\bar{\phi}}$ is the Nijenhuis tensor field \cite{Yano}. A normal contact metric $\bar{M}$ is called an indefinite Sasakian manifold \cite{Takahashi, Tanno}, for which we have
\begin{align}
	(\bar{\nabla}_{X}\bar{\phi})(Y) &=\bar{g}(X,Y)\zeta-\eta(Y)X,\label{k1}\\
	\bar{\nabla}_{X}\zeta &=-\bar{\phi}X,\label{p4}
\end{align}
for any $X$ and $Y$ tangent to $\bar{M}$.

Let $(M,g)$ be a lightlike hypersurface, tangent to the structure vector field $\zeta$, of an indefinite Sasakian manifold $(\bar{M}, \bar{\phi},\zeta, \eta, \bar{g})$.  In such a case, C. Calin \cite{Calin2} has shown that $\zeta$ belongs to the screen distribution $S(TM)$. Through out this paper, we assume that $\zeta$ belongs to $S(TM)$. Let $\xi$ and $N$ the metric normal and the transversal section, respectively. Since $(\bar{\phi},\zeta, \eta)$ is an almost contact structure and $\bar{\phi}\xi$ is a lightlike vector field, it follows that $\bar{\phi}N$ is lightlike too. Moreover, $\bar{g}(\bar{\phi}\xi,\xi)=0$ and, thus, $\bar{\phi}\xi$ is tangent to $TM$. Let us consider $S(TM)$ containing $\bar{\phi}TM^{\perp}$ as a vector subbundle. Consequently, $N$ is orthogonal to $\bar{\phi}\xi$ and we have $\bar{g}(\bar{\phi}N,\xi)=-\bar{g}(N,\bar{\phi}\xi)=0$ and $\bar{g}(\bar{\phi}N,N)=0$. This means that $\bar{\phi}N$ is tangent to $TM$ and in particular, it belongs to $S(TM)$. Thus, $\bar{\phi}tr(TM)$ is also a vector subbundle of $S(TM)$. In view of (\ref{p2}), we have
$\bar{g}(\bar{\phi}\xi,\bar{\phi}N)=1$. It is then easy to see that $\bar{\phi}TM^{\perp}\oplus \bar{\phi}tr(TM)$ is a non-degenerate vector subbbundle of $S(TM)$, with 2-dimensional fibers. Since $\zeta$ is tangent to $M$, and that $\bar{g}(\bar{\phi}\xi,\zeta)=\bar{g}(\bar{\phi}N,\zeta)=0$, then there exists a non-degenerate distribution $D_{0}$ on $TM$ such that 
\begin{align}\label{p13}
	S(TM)=\{\bar{\phi}TM^{\perp}\oplus \bar{\phi}tr(TM)\}\perp D_{0}\perp \langle\zeta\rangle,
\end{align}
where $\langle\zeta\rangle$ denotes the line bundle spanned by the structure vector field $\zeta$. Furthermore, one can easy to check that $D_{0}$ is an almost complex distribution with respect to $\bar{\phi}$, that is; $\bar{\phi}D_{0}=D_{0}$. 

Then, in view of (\ref{n97}), (\ref{n100}) and (\ref{p13}), the decompositions of $TM$ and $T\bar{M}$ becomes
\begin{align}\label{n101}
TM&=\{\bar{\phi}TM^{\perp}\oplus \bar{\phi}tr(TM)\}\perp D_{0}\perp \langle\zeta\rangle\perp TM^{\perp}.\\
T\bar{M}_{|M}&=\{\bar{\phi}TM^{\perp}\oplus \bar{\phi}tr(TM)\}\perp D_{0}\perp \langle\zeta\rangle\perp \{TM^{\perp}\oplus tr(TM)\}.\nonumber
\end{align}
 If we set $D=TM^{\perp}\perp \bar{\phi}TM^{\perp}\perp D_{0}$ and $D'=\bar{\phi}tr(TM)$, then (\ref{n101}) becomes
\begin{align}\label{p14}
	TM=D\oplus D'\perp \langle\zeta\rangle.
\end{align} 
Here, $D$ is an almost complex distribution and $D'$ is carried by $\bar{\phi}$ into the transversal bundle. 

Consider the lightlike vector fields $U$ and $V$ given by 
\begin{align}\label{f11}
	U=-\bar{\phi}N\quad \mbox{and}\quad V=-\bar{\phi}\xi, 
\end{align}
together with their corresponding $1$-forms $u$ and $v$ given by
\begin{align}\label{p16}
u(X)=g(X, V)\quad \mbox{and}\quad v(X)=g(X, U),
\end{align}
for any $X$ tangent to $M$.

Then, from (\ref{p14}), any $X$ tangent to $M$ can be written as 
\begin{align}\label{n90}
X=RX+QX+\eta(X)\zeta,
\end{align}
where $R$ and $Q$ are the projection morphisms of $TM$ onto $D$ and $D'$, respectively.

Applying $\bar{\phi}$ to (\ref{n90}) and using (\ref{p1}), we get 
\begin{align}\label{p17}
	\bar{\phi}X=\phi X+u(X)N,
\end{align}
where $\phi$ is a (1,1) tensor field defined on $M$ by $\phi X=\bar{\phi}RX$. Furthermore, we have 
\begin{align}\label{p18}
	\phi^{2}X&=-X+\eta(X)\zeta+u(X)U,\quad u(U)=1,\quad\phi U=0,\\
\eta(\phi X)&=u(\phi X)=0,\quad v(\phi X)=-\theta(X),\quad \theta(\phi X)=v(X),\label{p100}  	
\end{align}
for any $X$ tangent to $M$. It is easy to show that 
\begin{align}
	g(\phi X,\phi Y)&=g(X,Y)-\eta(X)\eta(Y)-u(Y)v(X)-u(X)v(Y),\nonumber\\
	g(\phi X,Y)&=-g(X,\phi Y)-u(X)\theta(Y)-u(Y)\theta(X),\label{p30}
\end{align}
for any $X$ and $Y$ tangent to $M$.

\begin{lemma}\label{lemma2}
	On a lightlike hypersurface $(M,g)$, tangent to the structure vector field $\zeta$, of an indefinite Sasakian manifold $(\bar{M}, \bar{\phi},\zeta, \eta, \bar{g})$, the following  holds.
\begin{align}
	\nabla_{X}\zeta &=-\phi X,\label{p19}\\
	B(X,\zeta)&=-u(X),\label{p20}\\
	C(X,\zeta)&=-v(X),\label{p21}\\
	B(X,U)&=C(X,V),\label{p22}\\
	(\nabla_{X}u)(Y)&=-B(X, \phi Y)-\tau(X)u(Y), \label{n110}\\
	(\nabla_{X}\phi)(Y)&=g(X,Y)\zeta-\eta(Y)X-B(X,Y)U+u(Y)A_{N}X,\label{p23}\\
	\nabla_{X}U&=\phi A_{N}X-\theta(X)\zeta+\tau(X)U,\label{p38}\\
		\nabla_{X}V&=\phi A^{*}_{\xi}X-\tau(X)V,\label{p39}
\end{align}
for all $X$ and $Y$ tangent to $M$.
\end{lemma}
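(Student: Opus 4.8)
The plan is to obtain all eight identities by a single mechanism: differentiate each of the relevant global objects ($\zeta$, $N$, $\xi$, and $\bar\phi Y$) with the Levi-Civita connection $\bar\nabla$, rewrite the result with the Sasakian structure equations (\ref{k1}) and (\ref{p4}), and then split the outcome into its component tangent to $M$ and its component along $tr(TM)$ using the Gauss-Weingarten equations (\ref{int1})--(\ref{flow7}). The two algebraic facts that make the splitting work are the decomposition $\bar\phi X=\phi X+u(X)N$ from (\ref{p17}), together with $\bar\phi N=-U$ and $\bar\phi\xi=-V$ from (\ref{f11}), and the skew-adjointness $\bar g(\bar\phi X,Y)=-\bar g(X,\bar\phi Y)$, which follows from $d\eta(X,Y)=\bar g(X,\bar\phi Y)$ being antisymmetric. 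Each identity then appears as either the tangential or the transversal part of one such computation.

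First I would treat $\zeta$. Inserting $\bar\nabla_X\zeta=-\bar\phi X$ from (\ref{p4}) into the Gauss formula (\ref{int1}) and expanding $\bar\phi X$ by (\ref{p17}) gives $\nabla_X\zeta+B(X,\zeta)N=-\phi X-u(X)N$; comparing the $TM$-part yields (\ref{p19}) and the $N$-part yields (\ref{p20}). For (\ref{p21}) I would differentiate the scalar $\bar g(N,\zeta)=0$ (which holds since $\zeta\in S(TM)$ is $\bar g$-orthogonal to $N$): metric compatibility of $\bar\nabla$ gives $\bar g(\bar\nabla_X N,\zeta)=-\bar g(N,\bar\nabla_X\zeta)=\bar g(N,\bar\phi X)=-\bar g(\bar\phi N,X)=v(X)$, while the Weingarten equation (\ref{flow6}) together with $\bar g(N,\zeta)=0$ gives $\bar g(\bar\nabla_X N,\zeta)=-\bar g(A_N X,\zeta)=-C(X,\zeta)$; equating the two produces (\ref{p21}).

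Next I would differentiate $U=-\bar\phi N$ and $V=-\bar\phi\xi$. Expanding $\bar\nabla_X U=-(\bar\nabla_X\bar\phi)N-\bar\phi(\bar\nabla_X N)$ with (\ref{k1}) (noting $\bar g(X,N)=\theta(X)$ and $\eta(N)=0$) and the Weingarten equation (\ref{flow6}), then decomposing $\bar\phi(A_N X)$ by (\ref{p17}), gives $\bar\nabla_X U=\phi A_N X-\theta(X)\zeta+\tau(X)U+u(A_N X)N$; its $TM$-part is exactly (\ref{p38}). The same scheme applied to $V=-\bar\phi\xi$, using (\ref{k1}) (with $\bar g(X,\xi)=\eta(\xi)=0$, so $(\bar\nabla_X\bar\phi)\xi=0$) and the Weingarten equation (\ref{flow7}), gives the tangential identity (\ref{p39}). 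Identity (\ref{p22}) then comes for free: the $N$-part of the $U$-computation reads $B(X,U)=u(A_N X)$, and since $u(Z)=g(Z,V)$ and $V\in S(TM)$ we have $u(A_N X)=g(A_N X,V)=C(X,V)$.

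Finally, (\ref{p23}) and (\ref{n110}) emerge together from computing $\bar\nabla_X(\bar\phi Y)$ in two ways. On one hand, $\bar\nabla_X(\bar\phi Y)=(\bar\nabla_X\bar\phi)Y+\bar\phi(\bar\nabla_X Y)$ evaluated with (\ref{k1}), (\ref{int1}) and $\bar\phi N=-U$; on the other, writing $\bar\phi Y=\phi Y+u(Y)N$ by (\ref{p17}) and differentiating with (\ref{int1}) and (\ref{flow6}). Equating the $TM$-parts gives $(\nabla_X\phi)(Y)=\nabla_X(\phi Y)-\phi(\nabla_X Y)$ in the form (\ref{p23}), and equating the coefficients of $N$ gives (\ref{n110}). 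I expect the only real subtlety to be here: because $\nabla$ is not a metric connection (\ref{p40}), a frontal attempt at (\ref{n110}) through $(\nabla_X u)(Y)=Xg(Y,V)-g(\nabla_X Y,V)$ forces one to carry the non-metricity terms and the derivative $\nabla_X V$ from (\ref{p39}); extracting (\ref{n110}) instead as the transversal component of the $\bar\phi Y$ computation sidesteps this bookkeeping entirely, so I would present it that way. The remaining work is the routine verification that the various inner products ($\bar g(\zeta,\xi)$, $\eta(N)$, $\eta(\xi)$, $\bar g(U,\xi)$, and so on) vanish.
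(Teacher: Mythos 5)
Your proposal is correct, and it is exactly the argument the paper intends: the paper's proof is the one-line remark that the identities follow by straightforward calculation from (\ref{k1}), (\ref{p4}) and the Gauss--Weingarten equations (\ref{int1})--(\ref{flow7}), and your computations (differentiating $\zeta$, $N$, $\xi$ and $\bar{\phi}Y$ with $\bar{\nabla}$, then splitting into tangential and transversal parts via (\ref{p17}) and (\ref{f11})) are precisely a careful execution of that plan. All the component identifications check out, including the slightly less obvious ones, namely reading (\ref{p22}) off as the $N$-component of the $U$-computation and (\ref{n110}) as the $N$-component of the $\bar{\phi}Y$-computation.
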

\begin{proof}
	A proof uses straightforward calculations, while considering (\ref{k1}), (\ref{p4}) and (\ref{int1})--(\ref{flow7}).
\end{proof}
\begin{lemma}\label{lemmaq}
Let $(M,g)$ be a lightlike hypersurface, tangent to the structure vector field $\zeta$, of an indefinite Sasakian manifold $(\bar{M}, \bar{g}, \bar{\phi}, \eta, \zeta)$. Then  $A^{*}_{\xi}$ satisfy the relation
\begin{align}\label{n91}
A^{*}_{\xi}\zeta=-V.
\end{align}
\end{lemma}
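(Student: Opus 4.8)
The plan is to pin down $A^{*}_{\xi}\zeta$ by pairing it, via the metric, against an arbitrary tangent vector, and then to promote the resulting identity from the degenerate bundle $TM$ to the non-degenerate screen distribution $S(TM)$, where equality of vectors can actually be read off. The only inputs needed are the relation (\ref{p9}) between the shape operator $A^{*}_{\xi}$ and the local second fundamental form $B$, the symmetry of $B$, and equations (\ref{p20}) and (\ref{p16}).

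First I would record that $h$, and hence its scalar component $B$, is symmetric, since both $\bar{\nabla}$ and the induced connection $\nabla$ are torsion-free. Taking $X=\zeta$ in (\ref{p9}), then using symmetry of $B$ followed by (\ref{p20}) and the definition of $u$ in (\ref{p16}), I compute for every $Y$ tangent to $M$
\begin{align*}
g(A^{*}_{\xi}\zeta, Y)=B(\zeta, Y)=B(Y,\zeta)=-u(Y)=-g(Y,V)=g(-V,Y).
\end{align*}
This gives $g(A^{*}_{\xi}\zeta+V, Y)=0$ for all $Y$ tangent to $M$, which is the heart of the argument.

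The one subtlety, and the step I expect to be the main obstacle, is that $g$ is degenerate on $TM$, so $g(W,Y)=0$ for all $Y\in TM$ does not by itself force $W=0$. To finish I would observe that the vector $W:=A^{*}_{\xi}\zeta+V$ in fact lies in $S(TM)$: the operator $A^{*}_{\xi}$ is screen-valued by (\ref{p11}), so $A^{*}_{\xi}\zeta\in S(TM)$, and $V=-\bar{\phi}\xi$ belongs to $S(TM)$ by the construction carried out in Section \ref{lightlike hypersurfaces}. Since $g$ restricted to $S(TM)$ is non-degenerate and the identity above holds in particular for all $Y\in S(TM)$, I conclude $W=0$, that is, $A^{*}_{\xi}\zeta=-V$. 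Everything apart from this last non-degeneracy step is a routine substitution; the care required is only in confirming that $A^{*}_{\xi}\zeta$ genuinely sits in the screen distribution, so that testing against screen vectors is enough to conclude.
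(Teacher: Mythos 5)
Your proof is correct and follows exactly the paper's own route: the paper's proof cites precisely relations (\ref{p9}) and (\ref{p20}) together with the non-degeneracy of $S(TM)$, which are the same ingredients you use. Your additional care in noting that both $A^{*}_{\xi}\zeta$ and $V$ lie in the screen distribution (so that testing against screen vectors suffices despite the degeneracy of $g$ on $TM$) is exactly the detail the paper leaves implicit.
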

\begin{proof}
A proof follows directly from relations (\ref{p9}) and (\ref{p20}), and the fact that $S(TM)$ is non-degenerate.
\end{proof}

\section{Parallelism of second fundamental forms $h$ and $B$}\label{parasff}

In this section, we show that there exist no lightlike hypersurface $(M,g)$, tangent to the structure vector field $\zeta$, of an indefinite Sasakian manifold $(\bar{M}, \bar{\phi},\zeta, \eta, \bar{g})$, can either have a parallel second fundamental form $h$ or a parallel local second fundamental form $B$. These findings can be seen in Theorems \ref{theorem1} and \ref{theorem2}. In order to establish these results, we need the following definition and some lemmas.

\begin{definition}
\rm{Let $(M,g)$ be a lightlike hypersurface of a semi-Riemannian manifold $(\bar{M}, \bar{g})$. Let $h$ be the second fundamental form of $M$, and $B$ its local second fundamental form. Then, we say that 
\begin{enumerate}
\item  $h$ is parallel if 
\begin{align}\label{n104}
(\nabla_{X}h)(Y,Z)=0;
\end{align}
\item  $B$ is parallel if 
\begin{align}\label{n105}
(\nabla_{X}B)(Y,Z)=0,
\end{align}
\end{enumerate}
for any $X$, $Y$ and $Z$ tangent to $M$}.
\end{definition}

\begin{remark}
\rm{One can easily see, from relation (\ref{x211}), that the parallelism of $h$ does not, in general, imply the parallelism of $B$ and vise-vasa. This motivates us to investigate their parallelism separately}.
\end{remark}

\begin{lemma}\label{lem1}
Let $(M,g)$ be a lightlike hypersurface of a semi-Riemannian manifold $(\bar{M},\bar{g})$. If either the second fundamental form $h$ of $M$ or its local second fundamental form $B$ is parallel, then $A^{*}_{\xi}$ satisfy 
\begin{align}\label{n2}
A^{*}_{\xi}A^{*}_{\xi}X=0,
\end{align}
for any $X$ tangent to $M$. 
\end{lemma}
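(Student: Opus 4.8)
The plan is to treat both hypotheses at once by showing that each of them forces the single identity $(\nabla_{X}B)(Y,\xi)=0$, and then to read off the conclusion from a direct expansion of this quantity. First I would compute $(\nabla_{X}B)(Y,\xi)$ from the definition (\ref{n103}). Because $B(\,\cdot\,,\xi)=0$ by (\ref{p10}), both the term $X\cdot B(Y,\xi)$ and the term $B(\nabla_{X}Y,\xi)$ vanish, leaving only $-B(Y,\nabla_{X}\xi)$. Substituting the Weingarten relation (\ref{flow7}), namely $\nabla_{X}\xi=-A^{*}_{\xi}X-\tau(X)\xi$, and using (\ref{p10}) once more to kill the $\tau(X)\xi$ contribution, this collapses to $(\nabla_{X}B)(Y,\xi)=B(Y,A^{*}_{\xi}X)$. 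Finally, the symmetry of $B$ together with (\ref{p9}) rewrites the right-hand side, so that
\begin{align*}
(\nabla_{X}B)(Y,\xi)=g(A^{*}_{\xi}A^{*}_{\xi}X,Y),
\end{align*}
for all $X,Y$ tangent to $M$.

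The next step is to verify that the left-hand side vanishes under either hypothesis. If $B$ is parallel this is immediate from (\ref{n105}). If instead $h$ is parallel, then (\ref{x211}) forces the $N$-coefficient $(\nabla_{X}B)(Y,Z)+\tau(X)B(Y,Z)$ to vanish for all $Y,Z$; taking $Z=\xi$ and invoking (\ref{p10}) again gives $(\nabla_{X}B)(Y,\xi)=-\tau(X)B(Y,\xi)=0$. Thus in both cases $g(A^{*}_{\xi}A^{*}_{\xi}X,Y)=0$ for every $Y$ tangent to $M$.

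To conclude I would note that $A^{*}_{\xi}$ is screen-valued: since $\bar{g}(A^{*}_{\xi}X,N)=0$ by (\ref{p11}), the decomposition (\ref{n98}) gives $A^{*}_{\xi}X=P(A^{*}_{\xi}X)\in S(TM)$, and in particular $A^{*}_{\xi}A^{*}_{\xi}X$ lies in $S(TM)$. As $S(TM)$ is non-degenerate, the vanishing of $g(A^{*}_{\xi}A^{*}_{\xi}X,Y)$ for all $Y$ yields $A^{*}_{\xi}A^{*}_{\xi}X=0$, which is (\ref{n2}). I expect the only real obstacle to be the first step: one must recognise that differentiating $B$ in the distinguished normal direction $\xi$ is precisely what makes the zeroth-order and first-order boundary terms disappear through $B(\,\cdot\,,\xi)=0$, thereby converting the parallelism (or recurrence) hypothesis into a statement about the square of $A^{*}_{\xi}$. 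Once that choice is made the computation is entirely routine, and no use of the Sasakian structure is required, consistent with the lemma being stated for a general semi-Riemannian ambient.
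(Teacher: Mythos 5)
Your proof is correct and follows essentially the same route as the paper: both expand $(\nabla_{X}B)(Y,\xi)$ via (\ref{n103}), kill the zeroth-order terms with $B(\cdot,\xi)=0$ from (\ref{p10}), substitute the Weingarten formula (\ref{flow7}) to obtain $B(Y,A^{*}_{\xi}X)=0$, and conclude via (\ref{p9}), the screen-valuedness of $A^{*}_{\xi}$, and the non-degeneracy of $S(TM)$. The only difference is organizational — you derive the identity $(\nabla_{X}B)(Y,\xi)=g(A^{*}_{\xi}A^{*}_{\xi}X,Y)$ first and then invoke either hypothesis, whereas the paper applies the parallelism hypothesis first and expands afterwards — which is immaterial.
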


\begin{proof}
Suppose that $h$ is parallel. Then, by (\ref{x211}) and (\ref{n104}), we have 
\begin{align}\label{n1}
(\nabla_{X}B)(Y,Z)+\tau(X)B(Y,Z)=0,
\end{align}
for any $X, Y$ and  $Z$ tangent to $M$. Then, replacing $Z$ with $\xi$ in (\ref{n1}), and using (\ref{p10}), we get 
\begin{align}\label{n3}
(\nabla_{X}B)(Y,\xi)=0, 
\end{align}
for any $X$ and $Y$ tangent to $M$. It follows from (\ref{n103}) and (\ref{n3}) that 
\begin{align}\label{n106}
X\cdot B(Y, \xi)-B(\nabla_{X}Y,\xi)-B(Y, \nabla_{X}\xi)=0.
\end{align}
Now, applying (\ref{p10}) to (\ref{n106}), we have 
\begin{align}\label{n107}
B(Y, \nabla_{X}\xi)=0. 
\end{align}
In view of (\ref{flow7}), (\ref{p10}) and (\ref{n107}), we have
$B(Y,A^{*}_{\xi}X)=0$. Thus, relation (\ref{n2}) follows easily from the last relation by the fact that $A^{*}_{\xi}$ is screen-valued operator and the non-degeneracy of the screen distribution $S(TM)$. In case $B$ is parallel, the proof also follows easily as above, which completes the proof.
\end{proof}

\begin{lemma}\label{lem2}
Let $(M,g)$ be a lightlike hypersurface, tangent to the structure vector field $\zeta$, of an indefinite Sasakian  manifold $(\bar{M}, \bar{\phi},\zeta, \eta, \bar{g})$, with a parallel second fundamental form $h$,  then   
\begin{align}
&B(X,\phi Y)+B(Y, \phi X)=0,\label{n5}\\
&\;A^{*}_{\xi}V=0, \quad A^{*}_{\xi}\phi A^{*}_{\xi} X=0,\label{n6}
\end{align}
for any $X$ and $Y$ tangent to $M$. 
\end{lemma}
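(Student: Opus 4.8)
The plan is to first establish the symmetry relation (\ref{n5}), and then read off both identities in (\ref{n6}) from it together with Lemmas \ref{lem1} and \ref{lemmaq}. Throughout I would use that parallelism of $h$ gives, via (\ref{x211}) and (\ref{n104}), the condition (\ref{n1}), namely $(\nabla_{X}B)(Y,Z)+\tau(X)B(Y,Z)=0$.

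For (\ref{n5}) the key idea is to feed the structure vector field $\zeta$ into the parallelism condition through the identity $B(X,\zeta)=-u(X)$ of (\ref{p20}), which converts the unknown covariant derivative of $B$ into the already-known covariant derivative of $u$ recorded in (\ref{n110}). Concretely, I would compute $(\nabla_{X}B)(Y,\zeta)$ in two ways. Using the definition (\ref{n103}) together with $\nabla_{X}\zeta=-\phi X$ from (\ref{p19}) and $B(\cdot,\zeta)=-u(\cdot)$, the three terms reorganize into $-(\nabla_{X}u)(Y)+B(Y,\phi X)$, and then substituting (\ref{n110}) gives
\[
(\nabla_{X}B)(Y,\zeta)=B(X,\phi Y)+B(Y,\phi X)+\tau(X)u(Y).
\]
On the other hand, (\ref{n1}) with $Z=\zeta$ yields $(\nabla_{X}B)(Y,\zeta)=-\tau(X)B(Y,\zeta)=\tau(X)u(Y)$. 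Equating the two expressions and cancelling the common term $\tau(X)u(Y)$ produces exactly $B(X,\phi Y)+B(Y,\phi X)=0$, which is (\ref{n5}).

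With (\ref{n5}) in hand the first identity in (\ref{n6}) is immediate: Lemma \ref{lem1} gives $A^{*}_{\xi}A^{*}_{\xi}X=0$, and Lemma \ref{lemmaq} gives $A^{*}_{\xi}\zeta=-V$, so applying $A^{*}_{\xi}A^{*}_{\xi}=0$ to $\zeta$ forces $A^{*}_{\xi}V=0$. For the second identity I would replace $X$ by $A^{*}_{\xi}X$ in (\ref{n5}), obtaining $B(A^{*}_{\xi}X,\phi Y)+B(Y,\phi A^{*}_{\xi}X)=0$. The first term equals $g(A^{*}_{\xi}A^{*}_{\xi}X,\phi Y)$ by (\ref{p9}) and hence vanishes by Lemma \ref{lem1}, leaving $B(\phi A^{*}_{\xi}X,Y)=0$ for all $Y$ after using the symmetry of $B$. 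Since $g(A^{*}_{\xi}\phi A^{*}_{\xi}X,Y)=B(\phi A^{*}_{\xi}X,Y)=0$ for every $Y$, and $A^{*}_{\xi}$ is screen-valued by (\ref{p11}), the non-degeneracy of $S(TM)$ gives $A^{*}_{\xi}\phi A^{*}_{\xi}X=0$.

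The main obstacle is the derivation of (\ref{n5}); the rest is routine. The crucial move is recognizing that $\zeta$ should be inserted into the parallelism condition precisely because $B(X,\zeta)=-u(X)$ ties the derivative of $B$ to the computable derivative of $u$ in (\ref{n110}). Once this is seen, the cancellation of $\tau(X)u(Y)$ does all the work, and the two operator identities follow by specializing (\ref{n5}) and invoking the non-degeneracy of the screen distribution.
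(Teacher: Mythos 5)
Your proposal is correct and follows essentially the same route as the paper: both derive (\ref{n5}) by evaluating $(\nabla_{X}B)(Y,\zeta)$ in two ways --- once from the parallelism condition via (\ref{x211}) and (\ref{p20}), and once from the definition (\ref{n103}) together with (\ref{p19}), (\ref{p20}) and (\ref{n110}) --- and both obtain the second identity in (\ref{n6}) by inserting $A^{*}_{\xi}$ into one slot of (\ref{n5}) and invoking (\ref{n2}) of Lemma \ref{lem1}, (\ref{p9}) and the non-degeneracy of $S(TM)$. The only (harmless) deviation is your derivation of $A^{*}_{\xi}V=0$: you apply $A^{*}_{\xi}A^{*}_{\xi}=0$ to $\zeta$ using $A^{*}_{\xi}\zeta=-V$ from Lemma \ref{lemmaq}, whereas the paper sets $Y=\xi$ in (\ref{n5}) and uses $\phi\xi=-V$ plus non-degeneracy; both are valid one-line arguments.
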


\begin{proof}
As $h$ is parallel, we replace $Z$ with $\zeta$ in  (\ref{x211}) to get 
\begin{align*}
(\nabla_{X}B)(Y,\zeta)+\tau(X)B(Y,\zeta)=0,
\end{align*}
for any $X$ and  $Y$  tangent to $M$, which after applying (\ref{p20}) to the second term, reduces to 
\begin{align}\label{n4}
(\nabla_{X}B)(Y,\zeta)-u(Y)\tau(X)=0.
\end{align}
But, using  (\ref{n103}), (\ref{p19}), (\ref{p20}) and (\ref{n110}), we have 
\begin{align}\label{n7}
(\nabla_{X}B)(Y,\zeta)&=X\cdot B(Y,\zeta)-B(\nabla_{X}Y,\zeta)-B(Y, \nabla_{X}\zeta)\nonumber\\
&=-X\cdot u(Y)+u(\nabla_{X}Y)+B(Y,\phi X)\nonumber\\
&=-(\nabla_{X}u)(Y)+B(Y,\phi X)\nonumber\\
&=B(X,\phi Y)+u(Y)\tau(X)+B(Y,\phi X).
\end{align}
Then, relation (\ref{n5}) follows from (\ref{n4}) and (\ref{n7}). 

Next, replacing $Y$ with $\xi$ in (\ref{n5}) and noting that $\phi \xi =-V$, we get $B(X,V)=0$, which leads to $A^{*}_{\xi}V=0$ by the non-degeneracy of the screen distribution $S(TM)$, and hence the first relation in (\ref{n6}) is proved.  Furthermore, replacing $Y$ with $A^{*}_{\xi}Y$ in (\ref{n5}) and using (\ref{n2}) of Lemma \ref{lem1}, we have $B(X, \phi A^{*}_{\xi}Y)=0$. This leads to $A^{*}_{\xi}\phi A^{*}_{\xi}Y=0$ by the non-degeneracy of the screen distribution $S(TM)$. This proves the second relation in (\ref{n6}), which completes the proof.  
\end{proof}

 In view of Lemmas \ref{lem1} and  \ref{lem2}, we have the following result. 

\begin{theorem}\label{theorem1}
There exist no lightlike hypersurface $(M,g)$, tangent to the structure vector field $\zeta$, of an indefinite Sasakian manifold $(\bar{M}, \bar{\phi},\zeta, \eta, \bar{g})$, with a parallel second fundamental form $h$.
\end{theorem}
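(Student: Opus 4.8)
The plan is to argue by contradiction: assume a lightlike hypersurface $(M,g)$ of the stated type carries a parallel second fundamental form $h$, and derive an impossible relation purely from the three lemmas already established. The whole argument will hinge on computing the single screen-valued vector $A^{*}_{\xi}U$ explicitly and then applying $A^{*}_{\xi}$ to it once more.

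First I would collect the consequences of parallelism. By Lemma \ref{lem1} we have $A^{*}_{\xi}A^{*}_{\xi}X=0$ for every $X$; by Lemma \ref{lem2} we have $A^{*}_{\xi}V=0$ together with the symmetry relation (\ref{n5}), namely $B(X,\phi Y)+B(Y,\phi X)=0$; and by Lemma \ref{lemmaq} (which needs no parallelism) we have $A^{*}_{\xi}\zeta=-V$. I would also record that $V=-\bar\phi\xi\neq 0$: if $V=0$ then $\bar\phi\xi=0$, whence by (\ref{p1}) $0=\bar\phi^{2}\xi=-\xi+\eta(\xi)\zeta=-\xi$, since $\eta(\xi)=\bar g(\xi,\zeta)=0$, which is impossible.

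The key step is to determine $A^{*}_{\xi}U$. Setting $Y=U$ in (\ref{n5}) and using $\phi U=0$ from (\ref{p18}) gives $B(U,\phi X)=0$ for all $X$. Since $\phi X=\bar\phi RX$ sweeps out the whole distribution $D$ as $X$ ranges over $TM$ (using $\bar\phi D=D$), I obtain $B(U,W)=0$ for every $W\in D$; in particular $B(U,V)=0$ and $B(U,W)=0$ for $W\in D_{0}$, because $V,D_{0}\subset D$. On the other hand, (\ref{p20}) with $u(U)=1$ yields $B(U,\zeta)=-1$. Reading off the components of $A^{*}_{\xi}U$ against the basis coming from the screen decomposition (\ref{p13}), via $g(A^{*}_{\xi}U,\cdot)=B(U,\cdot)$ and the non-degeneracy of $S(TM)$, the only surviving components are the one along $V$ (obtained by pairing with $U$, equal to $B(U,U)$) and the one along $\zeta$ (equal to $-1$); the $U$-component and the $D_{0}$-components vanish. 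Hence
\[
A^{*}_{\xi}U=B(U,U)\,V-\zeta .
\]
Applying $A^{*}_{\xi}$ once more and using $A^{*}_{\xi}V=0$ and $A^{*}_{\xi}\zeta=-V$ gives $A^{*}_{\xi}A^{*}_{\xi}U=B(U,U)\,A^{*}_{\xi}V-A^{*}_{\xi}\zeta=V\neq 0$, contradicting $A^{*}_{\xi}A^{*}_{\xi}U=0$ from Lemma \ref{lem1}. This contradiction shows no such hypersurface can have a parallel $h$.

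I expect the main obstacle to be the middle step: correctly pinning down all components of $A^{*}_{\xi}U$, that is, proving $B(U,\cdot)$ vanishes on all of $D$ so that no $U$- or $D_{0}$-component appears, while isolating the $\zeta$-component as exactly $-1$. The coefficient $B(U,U)$ along $V$ is harmless precisely because $A^{*}_{\xi}V=0$ annihilates it, so it never needs to be evaluated; the real content is that the $\zeta$-component survives and is carried by $A^{*}_{\xi}$ back onto the nonzero vector $V$.
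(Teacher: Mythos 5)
Your proposal is correct, and it rests on the same three pillars as the paper's proof --- Lemma \ref{lem1} ($A^{*}_{\xi}A^{*}_{\xi}=0$), Lemma \ref{lem2} ($A^{*}_{\xi}V=0$ and the symmetry relation (\ref{n5})), and Lemma \ref{lemmaq} ($A^{*}_{\xi}\zeta=-V$) --- and it reaches the same final contradiction $V=0$ at the vector $U$. Where you differ is in the execution of the middle step. The paper works at the level of operator identities valid for all $X$: it converts (\ref{n5}) via (\ref{p9}) and (\ref{p30}) into $-\phi A^{*}_{\xi}X+B(X,U)\xi+A^{*}_{\xi}\phi X=0$, applies $\phi$ to obtain (\ref{n12}), and then applies $A^{*}_{\xi}$, which requires the second relation of (\ref{n6}), $A^{*}_{\xi}\phi A^{*}_{\xi}=0$, to kill the term $A^{*}_{\xi}\phi A^{*}_{\xi}\phi X$. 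You instead specialize (\ref{n5}) at once to $Y=U$, getting $B(U,\phi X)=0$, i.e.\ $B(U,\cdot)$ vanishes on $D$ (using $\bar{\phi}D=D$), and then read off $A^{*}_{\xi}U=B(U,U)V-\zeta$ component by component against the screen splitting (\ref{p13}); note that the paper's identity (\ref{n12}) evaluated at $X=U$ produces exactly this formula, so the two routes converge. Your version is leaner: it never needs $A^{*}_{\xi}\phi A^{*}_{\xi}=0$, nor the projection computation $P\phi A^{*}_{\xi}X=\phi A^{*}_{\xi}X-B(X,U)\xi$, and you make explicit the fact --- left implicit in the paper --- that $V\neq 0$ (via $\bar{\phi}^{2}\xi=-\xi$). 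What the paper's longer route buys is a family of general identities ((\ref{n11}), (\ref{n12})) whose analogues are reused almost verbatim in the proofs of Theorems \ref{theorem2} and \ref{theorem3}, so the uniformity has expository value; your argument, by contrast, is the shortest path to this one theorem. All the component extractions you perform (pairing with $U$, $V$, $D_{0}$, $\zeta$) check out against the metric relations $g(U,V)=1$, $g(U,U)=g(V,V)=0$, $g(\zeta,\zeta)=1$, and the non-degeneracy of $D_{0}$, so there is no gap.
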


\begin{proof}
In view of relation (\ref{p9}) and (\ref{n5}) of Lemma \ref{lem2}, we have
\begin{align}\label{n9}
	g(A^{*}_{\xi}X,\phi Y)+g(A^{*}_{\xi}\phi X,Y)=0,
\end{align}
for any $X$ and $Y$ tangent to $M$. Applying relation (\ref{p30}) and the first relation in (\ref{n6}) to the first term in (\ref{n9}), we derive 
\begin{align}\label{n10b}
	g(A^{*}_{\xi}X,\phi Y)&=-g(\phi A^{*}_{\xi}X, Y)-B(X, V)\theta(Y)=-g(\phi A^{*}_{\xi}X, Y).
\end{align}
Replacing (\ref{n10b}) in (\ref{n9}), and using the fact that $S(TM)$ is non-degenerate, we get 
\begin{align}\label{n111}
	-P\phi A^{*}_{\xi}X+A^{*}_{\xi}\phi X=0,
\end{align}
for any $X$ tangent to $M$. On the other hand, using the last relation in (\ref{p100}), we have 
\begin{align}\label{x1}
\theta(\phi A^{*}_{\xi}X)=v(A^{*}_{\xi}X)=B(X,U),
\end{align}
for any $X$ tangent to $M$. Therefore, by the help of (\ref{n96}), (\ref{n98}), (\ref{f11}), (\ref{p17}) and (\ref{x1}), we have  
\begin{align}
P\phi A^{*}_{\xi}X&=\phi A^{*}_{\xi}X-\theta(\phi A^{*}_{\xi}X)\xi=\phi A^{*}_{\xi}X-B(X,U)\xi.\label{n112}
\end{align}
 Replacing (\ref{n112}) in (\ref{n111}), we get 
\begin{align}\label{n11}
	-\phi A^{*}_{\xi}X+B(X,U)\xi+A^{*}_{\xi}\phi X=0.
\end{align}
Applying $\phi$ to (\ref{n11}) and using (\ref{p18}), and remembering that $B(X,V)=0$ and $\phi \xi=-V$, we get 
\begin{align}\label{n12}
	A^{*}_{\xi}X-B(X,\zeta)\zeta-B(X,U)V+\phi A^{*}_{\xi}\phi X=0.
\end{align}
Next, applying $A^{*}_{\xi}$ to (\ref{n12}) and using (\ref{p20}), (\ref{n91}), (\ref{n2}) and the two relations in  (\ref{n6}), we get 
\begin{align}\label{n13}
	-B(X,\zeta)A^{*}_{\xi}\zeta=B(X,\zeta)V=-u(X)V=0,
\end{align}
for any $X$ tangent to $M$. Taking $U$ for $X$ in (\ref{n13}), we get $V=0$ which is impossible, which  completes the proof.
\end{proof}

 Next, we turn our attention to the parallelism of the local second fundamental form $B$. In that line, we need the following lemma. 

\begin{lemma}\label{lemma2}
	Let $(M,g)$ be a lightlike hypersurface, tangent to the structure vector field $\zeta$, of an indefinite Sasakian manifold $(\bar{M}, \bar{g}, \bar{\phi}, \eta, \zeta, \bar{g})$, with a parallel local second fundamental form $B$, then 
	\begin{align}
		B(X,\phi &Y)+B(Y, \phi X)+\tau(X)u(Y)=0,\label{n21}\\
		&A^{*}_{\xi}V=0, \quad A^{*}_{\xi}\phi A^{*}_{\xi}X=0,\label{n22}
	\end{align}
	for any $X$ and $Y$ tangent to $M$.
\end{lemma}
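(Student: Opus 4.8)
The plan is to mirror the structure of the proof of Lemma \ref{lem2}, but now using the hypothesis that $B$ is parallel (equation (\ref{n105})) rather than that $h$ is parallel. The only difference between the two hypotheses, as the Remark following the parallelism definitions makes clear, is the extra $\tau(X)B(Y,Z)$ term in relation (\ref{x211}): parallelism of $h$ forces $(\nabla_X B)(Y,Z)+\tau(X)B(Y,Z)=0$, whereas parallelism of $B$ gives simply $(\nabla_X B)(Y,Z)=0$. I therefore expect relation (\ref{n21}) to differ from (\ref{n5}) precisely by the presence of the $\tau(X)u(Y)$ term, which is exactly what the statement asserts.

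First I would set $Z=\zeta$ directly in the parallelism condition (\ref{n105}) to obtain $(\nabla_X B)(Y,\zeta)=0$ for all $X,Y$ tangent to $M$. Next I would recompute $(\nabla_X B)(Y,\zeta)$ using the definition (\ref{n103}) together with (\ref{p19}), (\ref{p20}) and (\ref{n110}), exactly as in the chain of equalities (\ref{n7}) in the proof of Lemma \ref{lem2}. That computation yields
\begin{align*}
(\nabla_X B)(Y,\zeta)=B(X,\phi Y)+u(Y)\tau(X)+B(Y,\phi X).
\end{align*}
Setting this equal to zero (since $B$ is parallel, not $h$) immediately gives relation (\ref{n21}); note that here the $\tau(X)u(Y)$ term survives on the left-hand side rather than cancelling against a term coming from $\tau(X)B(Y,\zeta)$, which is the whole source of the discrepancy with (\ref{n5}).

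To obtain (\ref{n22}) I would substitute $Y=\xi$ into (\ref{n21}). Since $\phi\xi=-V$, $u(\xi)=0$ (because $u(\xi)=g(\xi,V)=-\bar g(\xi,\bar\phi\xi)=0$), and $B(\xi,\phi X)=0$ by (\ref{p10}), the relation collapses to $B(X,V)=0$ for all $X$; the non-degeneracy of $S(TM)$ together with the screen-valued nature of $A^*_\xi$ then yields $A^*_\xi V=0$, the first equation in (\ref{n22}). For the second equation I would first need the analogue of Lemma \ref{lem1}, namely $A^*_\xi A^*_\xi X=0$; this already holds under the hypothesis that $B$ is parallel, as Lemma \ref{lem1} explicitly covers that case. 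Replacing $Y$ by $A^*_\xi Y$ in (\ref{n21}) and using $u(A^*_\xi Y)=g(A^*_\xi Y,V)=B(Y,V)=0$ (from the first part) kills the $\tau$-term, leaving $B(X,\phi A^*_\xi Y)+B(A^*_\xi Y,\phi X)=0$; the term $B(A^*_\xi Y,\phi X)=g(A^*_\xi A^*_\xi Y,\phi X)=0$ by Lemma \ref{lem1}, so $B(X,\phi A^*_\xi Y)=0$, whence $A^*_\xi\phi A^*_\xi Y=0$ by non-degeneracy of $S(TM)$.

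The only genuinely delicate point I anticipate is bookkeeping the $\tau$-dependent terms: one must verify carefully that the extra $\tau(X)u(Y)$ contribution does not cancel when $Y$ is specialized (to $\xi$ or to $A^*_\xi Y$), since it is exactly the vanishing of $u(\xi)$ and $u(A^*_\xi Y)$ that makes the specialized identities reduce to the same clean form as in the parallel-$h$ case. Everything else is a routine re-run of the algebra already carried out for Lemma \ref{lem2}, with the single structural change recorded above.
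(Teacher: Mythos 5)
Your proposal is correct and follows essentially the same route as the paper: setting $Z=\zeta$ in the parallelism condition, computing $(\nabla_{X}B)(Y,\zeta)$ via (\ref{n103}), (\ref{p19}), (\ref{p20}) and (\ref{n110}) to obtain (\ref{n21}), then specializing $Y=\xi$ and $Y=A^{*}_{\xi}Y$ (with Lemma \ref{lem1}, which indeed covers the parallel-$B$ case) to deduce (\ref{n22}). Your explicit bookkeeping of why $u(\xi)=0$ and $u(A^{*}_{\xi}Y)=B(Y,V)=0$ kill the $\tau$-terms is exactly the mechanism the paper uses, just spelled out more carefully.
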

\begin{proof}
	Suppose that $B$ is parallel.  Then by (\ref{n105}), we have
	\begin{align}\label{n17}
		(\nabla_{X}B)(Y,Z)=0,
	\end{align}
	for any $X$, $Y$ and $Z$ tangent to $M$. Replacing $Z$ with $\zeta$ in (\ref{n17}) and using (\ref{n103}), we get 
\begin{align}\label{n19}
		X\cdot B(Y,\zeta)-B(\nabla_{X}Y,\zeta)-B(Y, \nabla_{X}\zeta)=0.
	\end{align}
	Applying (\ref{p19}) and (\ref{p20}) to (\ref{n19}), we get 
	\begin{align}\nonumber
		-X\cdot u(Y)+u(\nabla_{X}Y)+B(Y,\phi X)=0,
	\end{align}
	which simplifies to 
	\begin{align}\label{n20}
		-(\nabla_{X}u)(Y)+B(Y,\phi X)=0.
	\end{align}
	Substituting (\ref{n110}) in (\ref{n20}), we get 
	\begin{align}\nonumber
		B(X, \phi Y)+\tau(X)u(Y)+B(Y,\phi X)=0,
	\end{align}
which proves (\ref{n21}). Replacing $Y$ with $\xi$ in (\ref{n21}) and noting that $\phi \xi=-V$, we get $B(X,V)=0$, which gives $A^{*}_{\xi}V=0$. On the other hand, replacing $Y$ with $A^{*}_{\xi}Y$ in (\ref{n21}) and using (\ref{n2}) of Lemma \ref{lem1} and the first relation in (\ref{n22}), we get $B(X,\phi A^{*}_{\xi}Y)=0$, which completes the proof.
\end{proof}

 Using Lemma \ref{lemma2}, we prove the following result.

\begin{theorem}\label{theorem2}
There exist no lightlike hypersurface $(M,g)$, tangent to the structure vector field $\zeta$, of an indefinite Sasakian manifold $(\bar{M}, \bar{\phi},\zeta, \eta, \bar{g})$, with a parallel local second fundamental form $B$.
\end{theorem}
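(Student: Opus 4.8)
The plan is to follow the proof of Theorem~\ref{theorem1} essentially line by line, treating the extra summand $\tau(X)u(Y)$ that distinguishes (\ref{n21}) from (\ref{n5}) as a correction term that is carried through the computation and shown to be annihilated at the very end. First I would convert (\ref{n21}) into an operator identity. Writing $B(X,Y)=g(A^{*}_{\xi}X,Y)$ via (\ref{p9}) and using the symmetry of $B$, relation (\ref{n21}) becomes $g(A^{*}_{\xi}X,\phi Y)+g(A^{*}_{\xi}\phi X,Y)+\tau(X)u(Y)=0$. Applying (\ref{p30}) to the first term, and noting that $u(A^{*}_{\xi}X)=g(A^{*}_{\xi}X,V)=B(X,V)=0$ (first relation of (\ref{n22})) and $\theta(A^{*}_{\xi}X)=0$ (by (\ref{p11})), I obtain exactly $g(A^{*}_{\xi}X,\phi Y)=-g(\phi A^{*}_{\xi}X,Y)$, as in (\ref{n10b}). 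Since $u(Y)=g(Y,V)$ with $V\in S(TM)$, the non-degeneracy of $S(TM)$ then yields the analogue of (\ref{n111}),
\[
-P\phi A^{*}_{\xi}X+A^{*}_{\xi}\phi X+\tau(X)V=0,
\]
differing from (\ref{n111}) only by the extra summand $\tau(X)V$. Substituting (\ref{n112}) turns this into $-\phi A^{*}_{\xi}X+B(X,U)\xi+A^{*}_{\xi}\phi X+\tau(X)V=0$, the analogue of (\ref{n11}).

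The next step is to apply $\phi$ to this identity. The only new computation relative to Theorem~\ref{theorem1} is the value of $\phi V$: since $V=-\bar{\phi}\xi$ and $\eta(\xi)=\bar{g}(\xi,\zeta)=0$, relation (\ref{p1}) gives $\bar{\phi}V=\xi$, and as $u(V)=g(V,V)=0$ one reads off $\phi V=\xi$ from (\ref{p17}). Using this together with $\phi\xi=-V$ and the evaluation of $\phi^{2}A^{*}_{\xi}X$ from (\ref{p18}) (where $\eta(A^{*}_{\xi}X)=B(X,\zeta)=-u(X)$ by (\ref{p20}) and $u(A^{*}_{\xi}X)=0$), applying $\phi$ produces
\[
A^{*}_{\xi}X+u(X)\zeta-B(X,U)V+\phi A^{*}_{\xi}\phi X+\tau(X)\xi=0,
\]
which is precisely (\ref{n12}) (recall $-B(X,\zeta)=u(X)$) augmented by the single term $\tau(X)\xi$.

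Finally I would apply $A^{*}_{\xi}$ to this last identity. The four terms inherited from (\ref{n12}) are disposed of exactly as in Theorem~\ref{theorem1}: $A^{*}_{\xi}A^{*}_{\xi}X=0$ by (\ref{n2}), $u(X)A^{*}_{\xi}\zeta=-u(X)V$ by (\ref{n91}), $B(X,U)A^{*}_{\xi}V=0$ by the first relation of (\ref{n22}), and $A^{*}_{\xi}\phi A^{*}_{\xi}\phi X=0$ by the second relation of (\ref{n22}) applied to $\phi X$. The decisive point is that the surviving correction $\tau(X)A^{*}_{\xi}\xi$ vanishes by (\ref{n95}). Hence I recover $u(X)V=0$ for all $X$, exactly as in (\ref{n13}); setting $X=U$ and using $u(U)=1$ forces $V=0$, which is impossible since $V=-\bar{\phi}\xi$ is a nonzero lightlike field. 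Thus the main thing to verify is not any hard estimate but the bookkeeping of the $\tau$-term: the extra $\tau(X)u(Y)$ in (\ref{n21}) propagates first to $\tau(X)V$ in the screen identity, then to $\tau(X)\xi$ after applying $\phi$, and is finally killed by $A^{*}_{\xi}\xi=0$, so the same contradiction as in the parallel-$h$ case survives intact.
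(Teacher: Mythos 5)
Your proposal is correct and follows essentially the same route as the paper's own proof: the paper likewise derives the screen identity $-P\phi A^{*}_{\xi}X+A^{*}_{\xi}\phi X+\tau(X)V=0$, substitutes $P\phi A^{*}_{\xi}X=\phi A^{*}_{\xi}X-B(X,U)\xi$, applies $\phi$ (using $\phi V=\xi$) and then $A^{*}_{\xi}$, with the correction term killed by $A^{*}_{\xi}\xi=0$, arriving at $u(X)V=0$ and the contradiction $V=0$. Your bookkeeping of the $\tau$-term, including the vanishing of $u(A^{*}_{\xi}X)$ and $\theta(A^{*}_{\xi}X)$, matches the paper's computation step for step.
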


\begin{proof}
	From (\ref{p9}), (\ref{p16}) and (\ref{n21}), we have 
	\begin{align}\label{n30}
		g(A^{*}_{\xi}X,\phi Y)+g(A^{*}_{\xi}Y, \phi X)+g(\tau(X)V,Y)=0,
	\end{align}
for any $X$ and $Y$ tangent to $M$. Since $B(X,V)=0$ (see Lemma \ref{lemma2}), we simplify the first term in (\ref{n30}), using relation (\ref{p30}), as follows 
	\begin{align}\label{n31}
		g(A^{*}_{\xi}X,\phi Y)&=-g(\phi A^{*}_{\xi}X, Y)-B(X,V)\theta(Y)=-g(\phi A^{*}_{\xi}X, Y).		
\end{align}
Replacing (\ref{n31}) in (\ref{n30}) and using the fact the screen distribution $S(TM)$ is non-degenerate, we get 
\begin{align}\label{n32}
	-P\phi A^{*}_{\xi}X+A^{*}_{\xi}\phi X+\tau(X)V=0
\end{align}
It is easy to see, from the last relation in (\ref{p100}), that 
\begin{align}\label{x4}
\theta(\phi A^{*}_{\xi}X)=v(A^{*}_{\xi}X)=B(X,U),
\end{align}
for any $X$ tangent to $M$. Thus, applying (\ref{n98}) and (\ref{x4}), we have
\begin{align}\label{x6}
P\phi A^{*}_{\xi}X=\phi A^{*}_{\xi}X-\theta(A^{*}_{\xi}X)\xi=\phi A^{*}_{\xi}X-B(X,U)\xi,
\end{align}
for any $X$ tangent to $M$. Then replacing (\ref{x6}) in  (\ref{n32}), we get 
\begin{align}\label{n33}
	-\phi A^{*}_{\xi}X+B(X,U)\xi+A^{*}_{\xi}\phi X+\tau(X)V=0.
\end{align}
Applying $\phi$ to (\ref{n33}) and using (\ref{p18}), we derive 
\begin{align}\label{n34}
	A^{*}_{\xi}X-\eta(A^{*}_{\xi}X)\zeta-u(A^{*}_{\xi}X)U+B(X,U)\phi \xi+\phi A^{*}_{\xi}\phi X+\tau(X)\phi V=0.
\end{align}
But $\eta(A^{*}_{\xi}X)=B(X,\zeta)$, $u(A^{*}_{\xi}X)=B(X,V)=0$, $\phi \xi =-V$	and $\phi V=\xi$. Therefore, (\ref{n34}) reduces to 
\begin{align}\label{n35}
	A^{*}_{\xi}X-B(X, \zeta)\zeta-B(X,U)V+\phi A^{*}_{\xi}\phi X+\tau(X)\xi=0.
\end{align}
Applying $A^{*}_{\xi}$ to (\ref{n35}) and using   (\ref{n98}), (\ref{p20}), (\ref{n2}) and  (\ref{n22}) of Lemma \ref{lemma2}, together with (\ref{n91}) of Lemma \ref{lemmaq}, we get 
\begin{align}\label{n36}
	-B(X,\zeta)A^{*}_{\xi}\zeta=B(X,\zeta)V=-u(X)V=0,
\end{align}
for any $X$ tangent to $M$. Replacing $X$ with $U$ in (\ref{n36}), we get $V=0$ which is impossible, and hence the proof.
\end{proof}

\begin{remark}
	\rm{Lightlike hypersurfaces $(M,g)$, tangent to the structure vector field $\zeta$, of an indefinite Sasakian space form $(\bar{M}(c), \bar{\phi},\zeta, \eta, \bar{g})$,  with parallel second fundamental form $h$ and parallel local second fundamental form $B$ have been studied in \cite[pages 346--352]{Massamba1} and in \cite[pages 231--233]{Massamba2}.  We however,  stress, with the help of Theorems \ref{theorem1} and \ref{theorem2} above,  that such hypersurfaces do not exist}.
\end{remark}

\section{Recurrence of second fundamental forms $h$ and $B$}\label{recusff}

Away from the second fundamental form $h$ and the local second fundamental form $B$ being parallel, we investigate a more general case, in which the above tensors are recurrent. For this reason, we start with the following definition. 

\begin{definition}
\rm{	Let $(M,g)$ be a lightlike hypersurface of a semi-Riemannian manifold $(\bar{M}, \bar{g})$. Let $h$ and $B$ denote its second fundamental form and local second fundamental form, respectively. Then, we say that
	\begin{enumerate}
		\item  $h$ is recurrent if there exist a one-form $\alpha$ on $M$ such that 
		\begin{align}\label{n44}
			(\nabla_{X}h)(Y,Z)=\alpha (X)h(Y,Z);
		\end{align}
		\item  $B$ is recurrent if there exist a one-form $\beta$ on $M$ such that 
		\begin{align}\label{n45}
			(\nabla_{X}B)(Y,Z)=\beta (X)B(Y,Z),
		\end{align}
	\end{enumerate}
	for any $X$, $Y$ and $Z$ tangent to $M$. Moreover, if $\alpha=\beta=0$ then we see that $h$ and $B$ becomes parallel}.
\end{definition}

\begin{proposition}\label{proposition1}
Let $(M,g)$ be a lightlike hypersurface of a semi-Riemannian manifold $(\bar{M}, \bar{g})$. Then, the second fundamental form $h$ of $M$ is recurrent if and only if its  local second fundamental form $B$ is recurrent.
\end{proposition}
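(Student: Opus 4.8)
The key identity relating the two covariant derivatives is equation (\ref{x211}), namely
\[
(\nabla_{X}h)(Y,Z)=\{(\nabla_{X}B)(Y,Z)+\tau(X)B(Y,Z)\}N,
\]
so my plan is to feed each recurrence hypothesis into this identity and read off a recurrence for the other tensor by matching the scalar coefficients of the transversal section $N$. Since $N$ is a nowhere-vanishing section of $tr(TM)$, an equation of the form $\{\cdots\}N=\{\cdots\}N$ forces equality of the bracketed coefficients, and this is exactly the mechanism that will convert tensor identities valued in $tr(TM)$ into scalar identities on $M$.

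For the forward direction, I would assume $h$ is recurrent with one-form $\alpha$, so that $(\nabla_{X}h)(Y,Z)=\alpha(X)h(Y,Z)=\alpha(X)B(Y,Z)N$ using $h=B\otimes N$. Substituting (\ref{x211}) on the left gives $\{(\nabla_{X}B)(Y,Z)+\tau(X)B(Y,Z)\}N=\alpha(X)B(Y,Z)N$. Cancelling $N$ yields $(\nabla_{X}B)(Y,Z)=\{\alpha(X)-\tau(X)\}B(Y,Z)$, which is precisely recurrence of $B$ with the one-form $\beta:=\alpha-\tau$. For the converse, assume $B$ is recurrent with one-form $\beta$; then (\ref{x211}) gives $(\nabla_{X}h)(Y,Z)=\{\beta(X)B(Y,Z)+\tau(X)B(Y,Z)\}N=\{\beta(X)+\tau(X)\}B(Y,Z)N=\{\beta(X)+\tau(X)\}h(Y,Z)$, so $h$ is recurrent with one-form $\alpha:=\beta+\tau$. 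Thus the two recurrence conditions are equivalent, with the recurrence one-forms differing exactly by the connection $1$-form $\tau$.

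I do not expect a genuine obstacle here; the statement is essentially a bookkeeping consequence of (\ref{x211}) together with the fact that $\tau$ is a globally defined $1$-form and $N$ is a non-vanishing section. The only point requiring a word of care is that the candidate one-forms $\alpha-\tau$ and $\beta+\tau$ are indeed smooth $1$-forms on $M$, but this is immediate since $\alpha$, $\beta$, and $\tau$ are all smooth $1$-forms. One should also note that the argument is purely formal and uses none of the Sasakian structure, which is consistent with the proposition being stated for an arbitrary lightlike hypersurface of a semi-Riemannian manifold rather than for the Sasakian setting of the surrounding section.
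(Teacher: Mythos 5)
Your proof is correct and follows essentially the same route as the paper: both directions feed the recurrence hypothesis into the identity (\ref{x211}) and identify the coefficient of $N$, producing $\beta=\alpha-\tau$ and $\alpha=\beta+\tau$ exactly as the paper does. Your added remarks on the non-vanishing of $N$ and the smoothness of the resulting one-forms are fine points of care but do not change the argument.
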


\begin{proof}
Suppose that $h$ is recurrent. Then,  by (\ref{x211}) and (\ref{n44}),  we have 
\begin{align*}
(\nabla_{X}B)(Y,Z)=\{\alpha(X)-\tau(X)\}B(Y,Z),
\end{align*}
for any $X$, $Y$ and $Z$ tangent to $M$. This shows that $B$ is also recurrent with $\beta=\alpha-\tau$. On the other hand, if $B$  is recurrent, then (\ref{x211}) and (\ref{n45}) gives 
\begin{align*}
(\nabla_{X}h)(Y,Z)= \{\beta(X)+\tau(X)\}B(Y,Z)N=\{\beta(X)+\tau(X)\}h(Y,Z),
\end{align*}
for any $X$, $Y$ and $Z$ tangent to $M$, This shows that $h$ is also recurrent with $\alpha=\beta+\tau$, which completes the proof.
\end{proof}
 In view of Proposition \ref{proposition1}, it is enough to investigate the recurrence of either $h$ or $B$. Here, we shall investigate the recurrence of the second fundametal form $h$. Let us start with the following lemma.

\begin{lemma}\label{lemma6}
Let $(M,g)$ be a lightlike hypersurface, tangent to structure vector field $\zeta$, of a semi-Riemannian manifold $(\bar{M},\bar{g})$. If the   second fundamental form $h$ of  $M$ is recurrent, then 
	\begin{align}\label{n43}
		A^{*}_{\xi}A^{*}_{\xi}X=0,
	\end{align}
	for any $X$ tangent to $M$. Furthermore, if $\bar{M}=(\bar{M}, \bar{g}, \bar{\phi}, \eta, \zeta, \bar{g})$ is an indefinite Sasakian manifold, such that the lightlike hypersurface $M$ is tangent to the structure vector field $\zeta$, then 
	\begin{align}
	&\alpha(X)=-B(U,\phi X),\label{n40}\\
	B(X,\phi Y)&+B(Y, \phi X)-B(U, \phi X)u(Y)=0,\label{n41}\\
		A^{*}_{\xi}&V=0,\quad A^{*}_{\xi}\phi A^{*}_{\xi}X=0,\label{n42}
	\end{align}
	for any $X$ and $Y$ tangent to $M$.
\end{lemma}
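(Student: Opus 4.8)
The plan is to mirror, as closely as possible, the structure of Lemmas \ref{lem1} and \ref{lem2}, replacing the parallelism hypothesis $(\nabla_X h)=0$ with the recurrence hypothesis $(\nabla_X h)(Y,Z)=\alpha(X)h(Y,Z)$ and tracking the extra $\alpha$-terms that appear. First I would handle relation (\ref{n43}): combining the recurrence identity (\ref{n44}) with (\ref{x211}), I expect to obtain
\begin{align}\nonumber
(\nabla_{X}B)(Y,Z)+\tau(X)B(Y,Z)=\alpha(X)B(Y,Z),
\end{align}
so that $(\nabla_{X}B)(Y,Z)=\{\alpha(X)-\tau(X)\}B(Y,Z)$. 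Substituting $Z=\xi$ and invoking (\ref{p10}) kills the right-hand side and yields $(\nabla_{X}B)(Y,\xi)=0$, exactly as in the proof of Lemma \ref{lem1}. From here the argument is verbatim that of Lemma \ref{lem1}: expanding with (\ref{n103}), applying (\ref{p10}) again, then using (\ref{flow7}) gives $B(Y,A^{*}_{\xi}X)=0$, and the non-degeneracy of $S(TM)$ together with the screen-valuedness of $A^{*}_{\xi}$ upgrades this to (\ref{n43}). This first part uses only the semi-Riemannian structure, not the Sasakian one, which matches the way the statement is phrased.

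Next I would bring in the Sasakian hypotheses to derive (\ref{n40}) and (\ref{n41}). Setting $Z=\zeta$ in the recurrence version of (\ref{x211}) and using $B(Y,\zeta)=-u(Y)$ from (\ref{p20}) on the right, I get $(\nabla_{X}B)(Y,\zeta)=\{\alpha(X)-\tau(X)\}B(Y,\zeta)=-\{\alpha(X)-\tau(X)\}u(Y)$. On the other hand, the explicit computation of $(\nabla_{X}B)(Y,\zeta)$ carried out in (\ref{n7}) — which relies on (\ref{n103}), (\ref{p19}), (\ref{p20}) and (\ref{n110}) and does not use parallelism — gives $(\nabla_{X}B)(Y,\zeta)=B(X,\phi Y)+u(Y)\tau(X)+B(Y,\phi X)$. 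Equating the two expressions and cancelling the $\tau(X)u(Y)$ terms should leave
\begin{align}\nonumber
B(X,\phi Y)+B(Y,\phi X)+\alpha(X)u(Y)=0.
\end{align}
To isolate $\alpha$ I would substitute $Y=U$ and use $u(U)=1$ from (\ref{p18}); since $\phi U=0$ the term $B(X,\phi U)$ vanishes, leaving $\alpha(X)=-B(U,\phi X)$, which is (\ref{n40}). Back-substituting this expression for $\alpha(X)$ into the displayed identity immediately yields (\ref{n41}).

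Finally, (\ref{n42}) follows by the same two specializations used in Lemma \ref{lem2}. Putting $Y=\xi$ in (\ref{n41}) and recalling $\phi\xi=-V$ (so $B(Y,\phi X)=B(\xi,\phi X)=0$ by (\ref{p10}) and $u(\xi)=0$) collapses the identity to $B(X,V)=0$, whence $A^{*}_{\xi}V=0$ by non-degeneracy of $S(TM)$. Putting $Y=A^{*}_{\xi}Y$ in (\ref{n41}) and using (\ref{n43}) to annihilate the $B(A^{*}_{\xi}Y,\phi X)$ term (after rewriting it via (\ref{p9}) as $g(A^{*}_{\xi}A^{*}_{\xi}Y,\phi X)$-type expressions), together with $A^{*}_{\xi}V=0$ to control the $u(A^{*}_{\xi}Y)$ contribution, should give $B(X,\phi A^{*}_{\xi}Y)=0$ and hence $A^{*}_{\xi}\phi A^{*}_{\xi}X=0$. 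The main obstacle I anticipate is the careful bookkeeping in deriving (\ref{n41}): one must verify that the $\tau(X)u(Y)$ contributions from (\ref{n7}) and from the right-hand side cancel exactly, so that the recurrence one-form enters only through the clean term $\alpha(X)u(Y)$; getting a stray $\tau$-term here would break the subsequent identification (\ref{n40}). The rest is routine once (\ref{n43}), (\ref{n40}) and (\ref{n41}) are in hand.
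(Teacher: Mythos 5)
Your proposal is correct and follows essentially the same route as the paper's own proof: the same substitutions $Z=\xi$ (giving $(\nabla_X B)(Y,\xi)=0$ and hence (\ref{n43})), $Z=\zeta$ combined with the computation (\ref{p103}) (giving $B(X,\phi Y)+B(Y,\phi X)+\alpha(X)u(Y)=0$), then $Y=U$ for (\ref{n40}), back-substitution for (\ref{n41}), and finally $Y=\xi$ and $Y=A^{*}_{\xi}Y$ for (\ref{n42})). The cancellation of the $\tau(X)u(Y)$ terms that you flagged as the main risk does indeed go through exactly as you anticipated, so there is no gap.
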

\begin{proof}
	Using (\ref{x211}) and (\ref{n44}), we get 
	\begin{align}\label{n46}
		(\nabla_{X}B)(Y,Z)+\tau(X)B(Y,Z)=\alpha(X)B(Y,Z).
	\end{align}
	Replacing $Z$ with $\xi$ in (\ref{n46}) and using (\ref{p10}), we get $(\nabla_{X}B)(Y,\xi)=0$. With the help of relations (\ref{flow7}), (\ref{p10}) and (\ref{n103}), the last relation becomes $B(Y,A^{*}_{\xi}X)=0$, from which (\ref{n43}) follows easily by the non-degeneracy of $S(TM)$. 
	
Next, if $\bar{M}=(\bar{M}, \bar{g}, \bar{\phi}, \eta, \zeta, \bar{g})$ is an indefinite Sasakian manifold such that the lightlike hypersurface $M$ is tangent to the structure vector field $\zeta$, then we can replace $Z$ with $\zeta$ in (\ref{n46}), and using (\ref{p20}), to get
\begin{align}\label{p102}
(\nabla_{X}B)(Y,\zeta)-\tau(X)u(Y)=-\alpha(X)u(Y),
\end{align}
for any $X$ and $Y$ tangent to $M$. On the other hand, using  (\ref{n103}), (\ref{p19}), (\ref{p20}) and (\ref{n110}), we have 
\begin{align}\label{p103}
(\nabla_{X}B)(Y,\zeta)&=X\cdot B(Y,\zeta)-B(\nabla_{X}Y,\zeta)-B(Y, \nabla_{X}\zeta)\nonumber\\
&=-X\cdot u(Y)+u(\nabla_{X}Y)+B(Y,\phi X)\nonumber\\
&=-(\nabla_{X}u)(Y)+B(Y,\phi X)\nonumber\\
&=B(X,\phi Y)+u(Y)\tau(X)+B(Y,\phi X).
\end{align}
Replacing (\ref{p103}) in (\ref{p102}), we get 
	\begin{align}\label{n48}
		B(X, \phi Y)+B(Y, \phi X)+\alpha (X)u(Y)=0.
	\end{align}
Taking $U$ instead of $Y$ in (\ref{n48}) and noting that $\phi U=0$, we have $\alpha(X)=-B(U,\phi X)$, which proves (\ref{n40}). Then, relation (\ref{n41}) follows from (\ref{n40}) and (\ref{n48}). Next, replacing $Y$ with $\xi$ in (\ref{n41}), we get $B(X,\phi \xi)=-B(X,V)=0$, i.e. $A^{*}_{\xi}V=0$, proving the first relation in (\ref{n42}). Next, replacing $Y$ with $A^{*}_{\xi}Y$ in (\ref{n41}) and using relations (\ref{p9}), (\ref{n43}) and the first relation in (\ref{n42}), we get $B(X,\phi A^{*}_{\xi}Y)=0$. This give us $A^{*}_{\xi}\phi A^{*}_{\xi}X=0$, by the non-degeneracy of $S(TM)$, which completes the proof.
\end{proof}

 With the help of Lemma \ref{lemma6}, we have the following result.

\begin{theorem}\label{theorem3}
	There exist no lightlike hypersurface $(M,g)$, tangent to the structure vector field $\zeta$,  of an indefinite Sasakian manifold $(\bar{M}, \bar{\phi},\zeta, \eta, \bar{g})$, with a recurrent second fundamental form $h$.
\end{theorem}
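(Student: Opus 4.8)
The plan is to follow the same strategy used for Theorem \ref{theorem2}, converting the bilinear identity of Lemma \ref{lemma6} into an operator identity for $A^{*}_{\xi}$ and then applying $\phi$ and $A^{*}_{\xi}$ in turn until every surviving term collapses onto the lightlike field $V$, forcing $V=0$ and hence a contradiction. Using (\ref{n40}), I would first read (\ref{n41}) as $B(X,\phi Y)+B(Y,\phi X)+\alpha(X)u(Y)=0$, so that the recurrence one-form $\alpha$ plays exactly the role that $\tau$ played in Theorem \ref{theorem2}; this is the observation that makes the whole argument go through verbatim.

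Next I would pass to operator form. By (\ref{p9}), the self-adjointness of $A^{*}_{\xi}$ with respect to $g$ (which holds because $B$ is symmetric), and $u(Y)=g(Y,V)$ from (\ref{p16}), the identity becomes $g(A^{*}_{\xi}X,\phi Y)+g(A^{*}_{\xi}\phi X,Y)+\alpha(X)g(V,Y)=0$. I would simplify the first term via (\ref{p30}), together with $A^{*}_{\xi}V=0$ from (\ref{n42}) and $\bar g(A^{*}_{\xi}X,N)=0$ from (\ref{p11}), to obtain $g(A^{*}_{\xi}X,\phi Y)=-g(\phi A^{*}_{\xi}X,Y)$. Since $\xi$ is $g$-orthogonal to all of $TM$, the $\xi$-component of $\phi A^{*}_{\xi}X$ drops out of the pairing, and the remaining vectors $P\phi A^{*}_{\xi}X$, $A^{*}_{\xi}\phi X$, $V$ all lie in $S(TM)$; the non-degeneracy of the screen distribution then yields $-P\phi A^{*}_{\xi}X+A^{*}_{\xi}\phi X+\alpha(X)V=0$. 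The projection is resolved using the last relation in (\ref{p100}), which gives $\theta(\phi A^{*}_{\xi}X)=v(A^{*}_{\xi}X)=B(X,U)$, whence $P\phi A^{*}_{\xi}X=\phi A^{*}_{\xi}X-B(X,U)\xi$ by (\ref{n98}).

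The decisive step is to apply $\phi$ and then $A^{*}_{\xi}$. Applying $\phi$ and using (\ref{p18}) together with $\phi\xi=-V$ and $\phi V=\xi$ (as in the proof of Theorem \ref{theorem2}), and noting $\eta(A^{*}_{\xi}X)=B(X,\zeta)$ and $u(A^{*}_{\xi}X)=B(X,V)=0$, I would arrive at $A^{*}_{\xi}X-B(X,\zeta)\zeta-B(X,U)V+\phi A^{*}_{\xi}\phi X+\alpha(X)\xi=0$. Applying $A^{*}_{\xi}$ now annihilates almost everything: $A^{*}_{\xi}A^{*}_{\xi}X=0$ by (\ref{n43}), $A^{*}_{\xi}\phi A^{*}_{\xi}(\cdot)=0$ and $A^{*}_{\xi}V=0$ by (\ref{n42}), and crucially $A^{*}_{\xi}\xi=0$ by (\ref{n95}), so the recurrence term $\alpha(X)\xi$ vanishes. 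With $A^{*}_{\xi}\zeta=-V$ from (\ref{n91}) and $B(X,\zeta)=-u(X)$ from (\ref{p20}), what remains is $B(X,\zeta)V=-u(X)V=0$ for all $X$. Taking $X=U$ and using $u(U)=1$ from (\ref{p18}) forces $V=0$, which is impossible because $V=-\bar\phi\xi$ is a nonzero lightlike field (see (\ref{f11})).

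I expect the only genuinely delicate point to be verifying that the recurrence form $\alpha$ does not obstruct the argument. It enters as the single term $\alpha(X)V$, is transported by $\phi$ to $\alpha(X)\xi$, and is then killed by $A^{*}_{\xi}\xi=0$ in (\ref{n95}) — precisely the mechanism by which the $\tau$-term was eliminated in Theorem \ref{theorem2}. Once this is noted, the recurrent case reduces to the same terminal contradiction, and the remaining manipulations are routine bookkeeping with the component identities of Section \ref{lightlike hypersurfaces} and the relations supplied by Lemma \ref{lemma6}.
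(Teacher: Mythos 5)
Your proposal is correct and follows essentially the same route as the paper's own proof: you convert Lemma \ref{lemma6} into the operator identity $-\phi A^{*}_{\xi}X+B(X,U)\xi+A^{*}_{\xi}\phi X+\alpha(X)V=0$, apply $\phi$ and then $A^{*}_{\xi}$, and obtain $-u(X)V=0$, forcing $V=0$ at $X=U$. The only cosmetic difference is that you carry the recurrence term as $\alpha(X)$ where the paper writes $-B(U,\phi X)$ (the same quantity by (\ref{n40})), and you justify the simplification of $g(A^{*}_{\xi}X,\phi Y)$ slightly more explicitly via (\ref{p11}).
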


\begin{proof}
From (\ref{n42}), we have  $B(X,V)=0$, for any $X$ tangent to $M$. Therefore, we simplify  the first term in (\ref{n41}), with the help of relation (\ref{p30}), as follows 
	\begin{align}\label{n319}
		g(A^{*}_{\xi}X,\phi Y)=-g(\phi A^{*}_{\xi}X, Y)-B(X,V)\theta(Y)=-g(\phi A^{*}_{\xi}X, Y),		
\end{align}
for any $X$ and $Y$ tangent to $M$. Then, using relations (\ref{p9}), (\ref{n41}) and (\ref{n319}), and the non-degeneracy of $S(TM)$, we have 
	\begin{align}\label{n501}
		-P\phi A^{*}_{\xi}X+A^{*}_{\xi}\phi X-B(U,\phi X)V=0,
	\end{align}
	for any $X$ tangent to $M$. 
By a straightforward calculation, while considering ther last relation in (\ref{p100}), we have 
\begin{align}\label{x5}
\theta(\phi A^{*}_{\xi}X)=v(A^{*}_{\xi}X)=B(X,U),
\end{align}
for any $X$ tangent to $M$.	In view of (\ref{n98}) and (\ref{x5}), we have
\begin{align}\label{x8}
P\phi A^{*}_{\xi}X&=\phi A^{*}_{\xi}X-\theta(\phi A^{*}_{\xi}X)\xi=\phi A^{*}_{\xi}X-B(X,U)\xi,
\end{align}
for any $X$ tangent to $M$. Now, relation (\ref{n501}) can be re-written, using  (\ref{x8}), as 
	\begin{align}\label{n510}
		-\phi A^{*}_{\xi}X+B(X, U)\xi+A^{*}_{\xi}\phi X-B(U,\phi X)V=0.
	\end{align}
	Applying $\phi$ to (\ref{n510}) and using (\ref{p18}), while noting that $u(A^{*}_{\xi}X)=B(X,V)=0$, $\phi \xi =-V$ and $\phi V=\xi$, we get 
	\begin{align}\label{n52a}
		A^{*}_{\xi}X-B(X,\zeta)\zeta-B(X,U)V+\phi A^{*}_{\xi}\phi X-B(U,\phi X)\xi=0.
	\end{align}
	Then, applying $A^{*}_{\xi}$ to (\ref{n52a}), while using (\ref{n95}), (\ref{p20}),  (\ref{n91}), (\ref{n43}) and (\ref{n42}), we get 
	\begin{align}\label{n53a}
		-B(X,\zeta)A^{*}_{\xi}\zeta=B(X,\zeta)V=-u(X)V=0.
	\end{align}
Taking $U$ instead of $X$  in (\ref{n53a}), we get $V=0$, which is impossible. This completes the proof.
\end{proof}

\begin{corollary}\label{corollary4} 
	There exist no lightlike hypersurface $(M,g)$, tangent to the structure vector field $\zeta$, of an indefinite Sasakian manifold $(\bar{M}, \bar{\phi},\zeta, \eta, \bar{g})$, with a recurrent local second fundamental form $B$.
\end{corollary}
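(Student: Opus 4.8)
The plan is to deduce this statement directly from Theorem \ref{theorem3} by invoking the equivalence established in Proposition \ref{proposition1}. Since Proposition \ref{proposition1} shows that the recurrence of $h$ and the recurrence of $B$ are logically equivalent (with the 1-forms related by $\alpha=\beta+\tau$), the non-existence result for recurrent $h$ should transfer verbatim to recurrent $B$. I would therefore argue by contradiction.

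First I would assume, toward a contradiction, that such a lightlike hypersurface $(M,g)$ exists, tangent to $\zeta$ in an indefinite Sasakian manifold, and carrying a recurrent local second fundamental form $B$; that is, $(\nabla_{X}B)(Y,Z)=\beta(X)B(Y,Z)$ for some 1-form $\beta$ on $M$ and all $X,Y,Z$ tangent to $M$. Applying the second half of Proposition \ref{proposition1}, the recurrence of $B$ forces $h$ to be recurrent as well, with recurrence 1-form $\alpha=\beta+\tau$, so that $(\nabla_{X}h)(Y,Z)=\alpha(X)h(Y,Z)$. Consequently $M$ is precisely a lightlike hypersurface, tangent to $\zeta$, of an indefinite Sasakian manifold whose second fundamental form $h$ is recurrent.

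This, however, contradicts Theorem \ref{theorem3}, which asserts that no such hypersurface exists. Hence the initial assumption is untenable, and no lightlike hypersurface of the stated type can have a recurrent local second fundamental form $B$, completing the proof.

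I do not expect any genuine obstacle here, since the corollary is a one-step logical consequence of results already in hand. The only substantive observation is that Proposition \ref{proposition1} permits transporting the hypothesis from $B$ to $h$; once that transfer is made, Theorem \ref{theorem3} immediately closes the case. In particular, no new geometric computation involving $A^{*}_{\xi}$, $\phi$, or the distributions $D$ and $D'$ is required, as all such work has already been carried out in establishing Theorem \ref{theorem3}.
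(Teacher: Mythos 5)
Your proof is correct and is exactly the argument the paper intends: the corollary is stated without a separate proof precisely because Proposition \ref{proposition1} (recurrence of $B$ $\Leftrightarrow$ recurrence of $h$, with $\alpha=\beta+\tau$) transfers the hypothesis to $h$, where Theorem \ref{theorem3} yields the contradiction. No further comment is needed.
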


\section{Parallelism of the induced structures $\phi$, $U$ and $V$}\label{parast}

 Turning to the induced structures $\phi$, $U$ and $V$, we have the following results.

\begin{theorem}\label{theorem5}
There exist no lightlike hypersurface $(M,g)$,  tangent to the structure vector field $\zeta$,  of an indefinite Sasakian manifold $(\bar{M}, \bar{\phi},\zeta, \eta, \bar{g})$, with  parallel induced structures $\phi$, $U$ and $V$.
\end{theorem}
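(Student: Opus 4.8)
The plan is to argue by contradiction: suppose that $\phi$, $U$ and $V$ are all parallel, i.e.\ that the right-hand sides of (\ref{p23}), (\ref{p38}) and (\ref{p39}) vanish identically. In fact the parallelism of $\phi$ by itself will already be contradictory, so the hypotheses on $U$ and $V$ are not strictly needed; I would nevertheless keep them in view, since they furnish an alternative derivation of the same final impossibility $V=0$ that runs parallel to Theorems \ref{theorem1}--\ref{theorem3}.

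The main computation is short. Since $\bar{\phi}\zeta=0$ and $u(\zeta)=0$ force $\phi\zeta=0$ through (\ref{p17}), I would differentiate $\phi\zeta$ and use (\ref{p19}) to obtain
\begin{align*}
(\nabla_{X}\phi)(\zeta)=\nabla_{X}(\phi\zeta)-\phi(\nabla_{X}\zeta)=-\phi(-\phi X)=\phi^{2}X .
\end{align*}
By (\ref{p18}) this equals $-X+\eta(X)\zeta+u(X)U$, so the parallelism of $\phi$ collapses to
\begin{align*}
X=\eta(X)\zeta+u(X)U
\end{align*}
for every $X$ tangent to $M$. This is the crucial identity: it forces the identity endomorphism of $TM$ to be supported on the rank-two span $\langle\zeta,U\rangle$, which is already absurd once $\dim M=2n\geq 4$.

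To extract the contradiction in the style of the earlier proofs, I would evaluate the displayed identity at $X=V$. Since $V$ is lightlike one has $u(V)=g(V,V)=0$, and since $V\in\bar{\phi}TM^{\perp}$ is $g$-orthogonal to $\langle\zeta\rangle$ in the decomposition (\ref{p13}) one has $\eta(V)=g(V,\zeta)=0$; hence $V=0$, which is impossible. Equivalently, taking $X=\xi$ gives $\xi=0$, since $\xi\in TM^{\perp}$ is $\bar{g}$-orthogonal to every tangent vector, in particular to $\zeta$ and $V$.

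I expect no genuine obstacle in this argument; the only points needing care are the vanishings $\phi\zeta=0$, $u(\zeta)=0$ and $\eta(V)=u(V)=0$, after which the contradiction is immediate. Should one prefer to use all three hypotheses symmetrically, the alternative is to start from $\nabla_{X}V=0$ in (\ref{p39}), obtaining $\phi A^{*}_{\xi}X=\tau(X)V$, and then to apply $\phi$ and $A^{*}_{\xi}$ together with (\ref{p18}), the identity $\phi V=\xi$, and Lemma \ref{lemmaq}; this again terminates in $V=0$.
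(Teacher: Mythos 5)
Your argument is correct for the theorem as literally stated, and its core coincides with the paper's treatment of the $\phi$-parallel case: both reduce $\nabla\phi=0$ to the identity $\eta(X)\zeta-X+u(X)U=0$ and then evaluate at $X=V$, using $\eta(V)=u(V)=0$, to reach the impossibility $V=0$. (The paper obtains this identity by putting $Y=\zeta$ in (\ref{p23}) and using (\ref{p20}); you rederive exactly that special case directly from $\phi\zeta=0$ and (\ref{p19}), which is fine.) The real difference is scope. The paper's proof is actually three independent proofs: it also shows that $\nabla U=0$ \emph{alone} is impossible --- the inner product of (\ref{p38}) with $\zeta$ forces $\theta\equiv 0$, contradicting $\theta(\xi)=\bar{g}(\xi,N)=1$ --- and that $\nabla V=0$ \emph{alone} is impossible, via (\ref{int3}), (\ref{p22}) and (\ref{p20}), which yield $0=C(\zeta,V)=B(U,\zeta)=-u(U)=-1$. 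That per-structure strength is what the paper actually uses afterwards: the closing Remark invokes Theorem \ref{theorem5} to rule out the hypothesis, in Massamba's work, that $U$ and $V$ are parallel, with nothing assumed about $\phi$. Your proof, deriving the contradiction from $\nabla\phi=0$ only, does not by itself cover those cases. Your closing sketch for $\nabla V=0$ does work (from (\ref{p39}) one gets $\phi A^{*}_{\xi}X=\tau(X)V$; taking $X=\zeta$ and using (\ref{n91}) gives $-\xi=\tau(\zeta)V$, impossible since $\xi$ is not valued in $S(TM)$), but the $U$-parallel case remains untouched, so as written your proof is strictly weaker than the paper's even though it settles the stated conjunction.
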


\begin{proof}
	Suppose that $\phi$ is parallel, i.e. $\nabla \phi=0$. Then, relation (\ref{p23}) gives 
	\begin{align}\label{n70}
		g(X,Y)\zeta-\eta(Y)X-B(X,Y)U+u(Y)A_{N}X=0,
	\end{align}
	for any $X$ and $Y$ tangent to $M$. Replacing $Y$ with $\zeta$ in (\ref{n70}) and using (\ref{p20}), we get 
	\begin{align}\label{n71}
		\eta(X)\zeta-X+u(X)U=0.
	\end{align}
Taking $V$ for $X$ in (\ref{n71}), we get the obvious contradiction $V=0$. 
	
Next, suppose $\nabla _{X}U=0$, for any $X$ tangent to $M$. Then, relation (\ref{p38}) gives 
	\begin{align}\label{n72}
		 \phi A_{N}X-\theta(X)\zeta+\tau(X)U=0,
	\end{align}
for any $X$ tangent to $M$. The inner product of (\ref{n72}) with $\zeta$ gives $\theta(X)=0$, which is impposisibe since $\theta(\xi)=\bar{g}(\xi,N)=1$. 

Finally if $\nabla_{X} V=0$, for any $X$ tangent to $M$, then by (\ref{int3}) one gets $\nabla^{*}_{X}V=0$  and $C(X,V)=0$, for any $X$ tangent to $M$. With the help of relation (\ref{p22}), the last relation means that $B(X,U)=C(X,V)=0$. If we replace $X$  with $\zeta$ in this relation and considering (\ref{p20}), we get $0=B(U,\zeta)=-u(U)=-1$,
which is impossible.
\end{proof}

\begin{definition}\cite[page 1922]{Jin1}
\rm{The structure tensor field $\phi$ of $(M,g)$ is said to be recurrent if there exists a 1-form $\omega$ on $M$ such that 
\begin{align}\label{n73}
	(\nabla _{X}\phi)(Y)=\omega(X)\phi Y,
\end{align}
for any $X$ and $Y$ tangent to $M$. $\phi$ is parallel whenever $\omega=0$}. 
\end{definition}

 As a generalization of part of Theorem \ref{theorem5}, we have the following result.
\begin{theorem}\label{theorem6}
	There exist no lightlike hypersurface $(M,g)$,  tangent to the structure vector field $\zeta$, of an indefinite Sasakian manifold $(\bar{M},\bar{\phi},\zeta, \eta, \bar{g})$, with a recurrent induced structure tensor $\phi$.
\end{theorem}
\begin{proof}
	Suppose that $\phi$ is recurrent. Then, relation (\ref{p23}) and (\ref{n73}) gives 
	\begin{align}\label{n74}
		g(X,Y)\zeta-\eta(Y)X-B(X,Y)U+u(Y)A_{N}X=\omega(X)\phi Y,
	\end{align}
	for any $X$ and $Y$ tangent to $M$. Replacing $Y$ with $\zeta$ in (\ref{n74}) and using (\ref{p20}) and the fact $\phi \zeta=0$, we get 
	\begin{align}\label{n75}
		\eta(X)\zeta-X+u(X)U=0.
	\end{align}
Replacing $X$ with $V$ in (\ref{n75}), we get the obvious impossibility $V=0$.
\end{proof}

 We also have the following.

\begin{theorem}\label{theorem7}
There exist no lightlike hypersurface $(M,g)$, tangent to the structure vector field $\zeta$ , of an indefinite Sasakian manifold $(\bar{M}, \bar{\phi},\zeta, \eta, \bar{g})$, such that $D'$ is a killing distribution.
\end{theorem}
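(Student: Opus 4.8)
The plan is to pin down what the hypothesis says and then contract it against the frame $\{\xi,\zeta,N\}$ to force a numerical contradiction, exactly in the spirit of Theorems \ref{theorem5} and \ref{theorem6}. First I would recall that $D'=\bar{\phi}\,tr(TM)$ is the line bundle spanned by $U=-\bar{\phi}N$ (see (\ref{f11})), so that $D'$ being a Killing distribution amounts to the generator $U$ being a Killing vector field, i.e. $(\mathcal{L}_{U}g)(X,Y)=0$ for all $X,Y$ tangent to $M$. Since the induced connection $\nabla$ is torsion-free but not metric, I would rewrite this condition through $\nabla$ as
\begin{align*}
(\mathcal{L}_{U}g)(X,Y)=(\nabla_{U}g)(X,Y)+g(\nabla_{X}U,Y)+g(\nabla_{Y}U,X)=0,
\end{align*}
and then feed in the explicit formula (\ref{p38}) for $\nabla_{X}U$ together with the non-metricity term (\ref{p40}), namely $(\nabla_{U}g)(X,Y)=B(U,X)\theta(Y)+B(U,Y)\theta(X)$.

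The technical heart is two orthogonality computations that collapse the $A_{N}$-terms. Using $\phi X=\bar{\phi}X-u(X)N$ from (\ref{p17}), the relation $\eta\circ\phi=0$ in (\ref{p18}), and $g(U,\zeta)=0$, I would show $g(\nabla_{X}U,\zeta)=-\theta(X)$ for every $X$, since $g(\phi A_{N}X,\zeta)=\eta(\phi A_{N}X)=0$, $g(\zeta,\zeta)=1$ and $g(U,\zeta)=0$. Dually, from (\ref{p30}) evaluated at $Y=\xi$ (using $\phi\xi=-V$, $\theta(\xi)=1$, $u(\xi)=0$, or equivalently the skew-symmetry of $\bar{\phi}$) one gets $g(\phi W,\xi)=0$ for all $W$, whence $g(\nabla_{\zeta}U,\xi)=g(\phi A_{N}\zeta,\xi)+\tau(\zeta)v(\xi)=0$, since $\theta(\zeta)=0$ and $v(\xi)=g(\xi,U)=0$.

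With these in hand I would evaluate the Killing identity at $(X,Y)=(\xi,\zeta)$. Using $B(U,\xi)=0$ from (\ref{p10}), $B(U,\zeta)=-u(U)=-1$ from (\ref{p20}), $\theta(\zeta)=\bar{g}(\zeta,N)=0$ and $\theta(\xi)=\bar{g}(\xi,N)=1$ from (\ref{n96})--(\ref{p5}), the three terms become $(\nabla_{U}g)(\xi,\zeta)=-1$, $g(\nabla_{\xi}U,\zeta)=-\theta(\xi)=-1$, and $g(\nabla_{\zeta}U,\xi)=0$, so their sum is $-2\neq 0$, the desired contradiction. (If the intended definition of a Killing distribution is instead the symmetrized covariant derivative $g(\nabla_{X}U,Y)+g(\nabla_{Y}U,X)=0$, the same two computations give $g(\nabla_{\zeta}U,\xi)=\theta(\xi)=1$ against the direct value $0$, i.e. $0=1$; so either reading closes the argument.)

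I expect the main obstacle to be purely bookkeeping rather than conceptual: correctly carrying the non-metric term $(\nabla_{U}g)$, and verifying the two vanishing statements $g(\phi\,\cdot\,,\zeta)=0$ and $g(\phi\,\cdot\,,\xi)=0$ from the established identities (\ref{p18}) and (\ref{p30}) rather than by appealing to $\bar{g}$ directly. Settling the precise definition of ``Killing distribution'' used in the paper is the only genuinely delicate point, and it is harmless here because both standard conventions yield a contradiction by the same evaluation at $(\xi,\zeta)$.
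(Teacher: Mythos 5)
Your proposal is correct and follows essentially the same route as the paper: rewrite $\mathcal{L}_{U}g$ through $\nabla$ with the non-metricity correction (\ref{p40}), evaluate at $(X,Y)=(\xi,\zeta)$, and obtain the numerical contradiction $-1-1+0=-2\neq 0$. The only cosmetic differences are that you compute $g(\nabla_{\xi}U,\zeta)$ directly from (\ref{p38}) rather than via $-g(U,\nabla_{\xi}\zeta)$ as in the paper, and you verify $g(\nabla_{\zeta}U,\xi)=0$ by hand when it is immediate from $\xi$ spanning the radical of $g$.
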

\begin{proof}
By a direct calculation, we have 
\begin{align}\label{n80}
(\mathcal{L}_{U}g)(X,Y)&=U\cdot g(X,Y)-g([U,X],Y)-g(X, [U,Y])\nonumber\\
&=(\nabla_{U}g)(X,Y)+g(\nabla_{X}U,Y)+g(X,\nabla_{Y}U),
\end{align}
for any $X$ and $Y$ tangent to $M$. Then, applying (\ref{p40}) to (\ref{n80}), we get 
\begin{align}\label{n81}
(\mathcal{L}_{U}g)&(X,Y)\nonumber\\
&=B(U,Y)\theta(X)&+B(U,X)\theta(Y)+g(\nabla_{X}U,Y)+g(X,\nabla_{Y}U).
\end{align}
Suppose that $\mathcal{L}_{U}g=0$, that is $D'$ is a killing distribution, then (\ref{n81}) leads to 
\begin{align}\label{n82}
B(U,Y)\theta(X)&+B(U,X)\theta(Y)+g(\nabla_{X}U,Y)+g(X,\nabla_{Y}U)=0.
\end{align}
Replacing $X$ with $\xi$ and $Y$ with $\zeta$ in (\ref{n82}) and using (\ref{p10}), we get 
\begin{align}\label{n83}
B(U, \zeta)+g(\nabla_{\xi}U,\zeta)=0.
\end{align}
But, using (\ref{p19}) and (\ref{p20}), we have 
\begin{align}
B(U,\zeta)&=-u(U)=-1,\label{s1}\\
g(\nabla_{\xi}U,\zeta)&=-g(U, \nabla_{\xi}\zeta)=g(U,\phi \xi)=-1.\label{s2}
\end{align}
Replacing (\ref{s1}) and (\ref{s2}) in (\ref{n83}), we arrive at an obvious contradiction.
\end{proof}

\begin{remark}
\rm{ In the paper \cite[Theorem 4.11]{Massamba2}, the author studies lightlike hypersurfaces $(M,g)$, tangent to the structure vector field $\zeta$, of an indefinite Sasakian manifold $(\bar{M}, \bar{\phi},\zeta, \eta, \bar{g})$, with the assumption that $U$ and $V$ are parallel vector fields. But this is not possible by Theorem \ref{theorem5}. Furthermore, in the paper \cite[Theorem 4.12]{Massamba1}, the ditribution $D'$ is assumed to be killing but by Theorem \ref{theorem7} this is not possible. On the other hand, in the paper \cite[Theorem 1]{Sus}, the author studies the geometry  of recurrent lightlike hypersurfaces of an indefinite Sasakian manifold. However, we have seen, in Theorem \ref{theorem6}, that such hypersurfaces do not exist}.
\end{remark}

\end{document}